\newcounter{sarrow}
\def\nodedistance{\tikz@node@distance}
\definecolor{teal}{rgb}{0.0, 0.5, 0.5}
\definecolor{tealblue}{rgb}{0.21, 0.46, 0.53}
\definecolor{tealgreen}{rgb}{0.0, 0.51, 0.5}
\definecolor{tuscanred}{rgb}{0.51, 0.21, 0.21}
\definecolor{sangria}{rgb}{0.57, 0.0, 0.04}
\definecolor{rufous}{rgb}{0.66, 0.11, 0.03}
\definecolor{pinegreen}{rgb}{0.0, 0.47, 0.44}
\definecolor{darkscarlet}{rgb}{0.34, 0.01, 0.1}
\definecolor{darkseagreen}{rgb}{0.56, 0.74, 0.56}
\definecolor{darkpastelred}{rgb}{0.76, 0.23, 0.13}
\definecolor{darkpink}{rgb}{0.91, 0.33, 0.5}
\definecolor{darkpastelblue}{rgb}{0.47, 0.62, 0.8}
\definecolor{alizarin}{rgb}{0.82, 0.1, 0.26}
\definecolor{candyapplered}{rgb}{1.0, 0.03, 0.0}
\newcommand{\Pm}{P_{min}}
\DeclareMathOperator{\match}{Match \, }
\newcommand{\hyref}[2]{ \hyperref[#2]{#1~\ref*{#2}} }
\newcommand{\Canakci}{\c{C}anak\c{c}\i}
\newcommand{\Ilke}{\.{I}lke }
\theoremstyle{plain}
\newtheorem{theorem}{Theorem}[section]
\newtheorem{lemma}[theorem]{Lemma}
\newtheorem{corollary}[theorem]{Corollary}
\newtheorem{proposition}[theorem]{Proposition}
\theoremstyle{definition}
\newtheorem{remark}[theorem]{Remark}
\newtheorem{example}[theorem]{Example}
\newtheorem{definition}[theorem]{Definition}
\DeclareMathAlphabet{\mathpzc}{OT1}{pzc}{m}{it}
\newcommand{\calb}{\mathcal{B}}
\newcommand{\calg}{\mathcal{G}}
\newcommand{\calh}{\mathcal{H}}
\newcommand{\call}{\mathcal{L}}
\newcommand{\cals}{\mathcal{S}}
\renewcommand{\setminus}{\backslash}
\renewcommand{\emptyset}{\varnothing}
\newcommand{\arr}{\ar@{-}[r]}
\renewcommand{\phi}{\varphi}
\renewcommand{\epsilon}{\varepsilon}
\begin{document}

\title[Lattice bijections for submodules]{Lattice bijections for string modules, snake graphs and the weak Bruhat order}

\thanks{This work was supported through the Engineering and Physical Sciences Research Council, grant number EP/K026364/1, UK. The first author was also supported by EPSRC through EP/P016014/1 and the second author by the EPSRC through an Early Career Fellowship EP/P016294/1.}
\subjclass[2000]{Primary: 
16P10,  
03G10, 
05E10, 
06D05. 
Secondary:
06A07.	
}
\keywords{String combinatorics, snake graphs, perfect matchings, symmetric groups, Bruhat order, distributive lattices}

\author{\Ilke \Canakci}
\address{School of Mathematics, Statistics and Physics, Newcastle University, Newcastle Upon Tyne NE1 7RU, United Kingdom}
\email{ilke.canakci@ncl.ac.uk}
\author{Sibylle Schroll} 
\address{Department of Mathematics, University of Leicester, University Road, Leicester LE1 7RH, United Kingdom}
\email{schroll@leicester.ac.uk}

\begin{abstract}
In this paper we introduce abstract string modules and give an explicit bijection between the submodule lattice of an abstract string module and the perfect matching lattice of the corresponding  abstract snake graph. In particular, we make explicit the direct correspondence between a submodule of a string module and the perfect matching of the corresponding snake graph. For every string module, we define a Coxeter element in a symmetric group, and we establish a bijection between these lattices and the interval in the weak Bruhat order determined by the Coxeter element. Using the correspondence between string modules and snake graphs, we give a new concise formulation of snake graph calculus.
\end{abstract}

\date{\today}

\maketitle



\section{Introduction}

Recently there have been many results relating the weak Bruhat order of a Coxeter group to representation theoretic notions such as  $\tau$-tilting theory, maximal green sequences  and torsion classes, see for example \cite{BCZ, Engenhorst, IRRT, Mizuno}.  In this paper, we establish an explicit combinatorial bijection between an interval in the weak Bruhat order of a symmetric group, the submodule lattice of a string module and the perfect matching lattice of a snake graph. In the process, we show how to determine the perfect matching corresponding to a submodule of a  given  submodule of a string module $M$ corresponding to a snake graph $\calg$ and conversely how to determine the submodule corresponding to a given perfect matching.

Snake graphs have been introduced in \cite{MSW1}  and used in \cite{MSW2} to give a combinatorial description of a basis of cluster algebras related to oriented surfaces. Since their introduction, snake graphs have 
been related to many different subjects such as number theory and knot theory. For the latter, in a recent paper \cite{LS} they have been used to  realise Jones polynomials of $2$-bridge knots as specialisations of cluster variables.  For the former, the introduction of the notion of abstract snake graph and the snake graph calculus developed in \cite{CS1,CS2,CS3} give a snake graph interpretation of skein relations and this was used to establish a bijection between snake graphs and positive finite continued fractions \cite{CS4}. 
In Section~\ref{Sec::NewBijection} of this paper, in Theorem~\ref{thm::CS bijection}, we give a new very concise formulation of the snake graph calculus in \cite{CS1}, significantly condensing the statement and thus making it more accessible and easier to work with, by introducing a uniform local configuration for the bijection rather than the step-by-step process given in \cite{CS1}. We also note that the bijection induces (possibly split) extensions between the corresponding submodules, see Proposition~\ref{prop::extension}.

In the representation theory of algebras, one of the best understood type of modules are the so-called \emph{string modules}. String modules can be defined using word combinatorics, and many of the representation theoretic properties can be deduced from the word combinatorics. 

The main motivation of this paper is to give a series of explicit bijections between perfect matching lattices of snake graphs, canonical submodule lattices of string modules (as defined in Section~\ref{def::canonical submodule lattice}) and intervals in the weak Bruhat order of symmetric groups. More precisely, based on the close connection of snake graphs and string modules,  we introduce the notion of an abstract string module through abstract word combinatorics on a two letter alphabet. The idea is that an abstract string module is like a generic string module, which when we specify an algebra might specialise to a string module (of the same form) over that algebra.  We then show how to obtain the perfect matching of an abstract snake graph corresponding to a submodule of an abstract string module and vice versa. We then  show that the lattice of (canonically embedded) submodules  of an abstract string module is in bijection with the perfect matching lattice of the corresponding abstract snake graph by giving an explicit bijection between  a perfect matching of a snake graph and the corresponding submodule. That such a bijection exists follows from  the fact that in terms of perfect matchings of snake graphs the cluster character  \cite{CC, P} is  the same as the expansion formula for cluster variables  \cite{MSW1} corresponding to arcs in a surface  and a remark to this effect already appears in \cite{MSW2}.

Given a string module $M$, in Section~\ref{sec::weak Bruhat} we define an associated Coxeter element $\sigma_M$ in a symmetric group $\frak S_M$ determined by $M$. Denote the  canonical submodule lattice of $M$ by  $\call(M)$ and the lattice of perfect matchings  of the snake graph $\calg$ associated to $M$ by $\call (\calg)$ (see Section~\ref{sec::abstract snake graphs}). The main result of this paper is the following.
 
\textbf{{ Theorem}} (Theorem~\ref{thm::LatticeBijections}). {\it
Let $M$ be a string module over a finite dimensional algebra $A=KQ/I$ with corresponding snake graph $\calg$ and let $\sigma_M$ be the associated Coxeter element. Then we have  bijections between 
\begin{enumerate}
\item the lattice corresponding to the interval $[e,\sigma_M]$ in the weak Bruhat order of $\frak S_M$, 
\item  the  canonical submodule lattice  $\call (M)$, 
\item the lattice of perfect matchings $\call (\calg)$.
\end{enumerate}
}

{\it Acknowledgements:} We would like to thank Nathan Reading for helpful discussions on distributive lattices.


\section{Submodules and perfect matchings of snake graphs}

\subsection{Abstract snake graphs}\label{sec::abstract snake graphs}

\begin{definition}\label{def:abstract snake graph}  1) We define a \textit{tile}  to be a graph with four vertices and four edges such that each vertex lies in exactly two edges. A geometric realisation of this graph is a square embedded in the plane (such that the edges are parallel to the $x$ and $y$ axes)  and we freely refer to the geometric realisation as a tile.

\begin{figure}[!htbp]
\begin{tikzpicture}[scale=1]
\draw (0,0)-- node[below, scale=.8] {Bottom} (1,0)-- node[right, scale=.8] {Right}(1,1)-- node[above, scale=.8] {Top}(0,1)--node[left, scale=.8] {Left} (0,0);
\node[scale=1] at (.5,.5){$G$};
\end{tikzpicture}  
\end{figure}

2) We say that two tiles are \textit{glued} if they share a common edge.  

3) Starting with a tile $T_1$, we glue a tile $T_2$ either to the right or top edge of $T_1$. Then glue $T_3$ either to the right or top edge of $T_2$ and so on up to gluing a tile $T_n$ to the right or top edge of $T_{n-1}$. 
We call $T_1, \ldots, T_n$ together with a gluing of edges a \textit{snake graph} $\calg$.  
 \end{definition}

\begin{remark}
This definition is different from the original definition in \cite{MS} and does not give the edge weights of the snake graph. The definition is, however, very similar to the definition of abstract snake graphs in \cite{CS1}.
\end{remark}

  \begin{definition} 
  1) We call a snake graph $\calg$ with tiles $T_1, \ldots, T_n$  a \textit{zigzag} if  for all $i$ either $T_i$ is glued on top of $T_{i-1}$ and $T_{i+1}$ is glued to the right of $T_i$ or if $T_i$ is glued to the right of $T_{i-1}$ and $T_{i+1}$ is glued on top of $T_i$. 
  
  2) We call a snake graph  $\calg$ with tiles $T_1, \ldots, T_n$  a \textit{straight piece} if   for all $i$ either $T_i$ is glued on to the right of $T_{i-1}$ and $T_{i+1}$ is glued to the right of $T_i$ or if $T_i$ is glued on top of $T_{i-1}$ and $T_{i+1}$ is glued on top of $T_i$. 
  \end{definition}

\begin{definition}
Given a finite set $S$, we label each tile of a snake graph $\calg$ by an element in $S$ and we call $S$ a set of \textit{face weights} of $\calg$.
\end{definition}  
Note that two tiles of a snake graph might have the same face weight.   
 
 \begin{definition} 
 A \textit{perfect matching} $P$ of a graph $G$ is a subset of the edges of $G$ such that every vertex of $G$ is incident to exactly one edge in $P$. We denote by 
$\match (G) $ the set of perfect matchings of $G$. 

For $H$ a subgraph of $G$, we denote by $P\vert_H$ the restriction of $P$ to $H$.  We note that $P\vert_H$ may not be a perfect matching of $H$.
\end{definition} 
  
For a snake graph $\calg$, the set of perfect matchings of $\calg$ forms a distributive lattice $\call (\calg)$ \cite{MSW1}.  The edges in $\call (\calg)$ can be labelled by the   face weights of $\calg$ in the following way. Two perfect matchings in $\call(\calg)$ are connected by an edge if the corresponding matchings are obtained by rotating a tile in $\calg$ and we label the corresponding edge in the Hasse diagram of $\call(\calg)$ by the face weight of this tile. 

\begin{example}
Example of a perfect matching lattice with edges labelled by face weights.

\[
\begin{tikzpicture}[scale=.9,rotate=180]
\node at (4.5,8){$\begin{tikzpicture} [scale=.3]
\draw (1,1)--(4,1)--(4,2)--(1,2)--(1,1);
\draw (2,1)--(2,2);
\draw (3,1)--(3,2);
\draw[color=candyapplered, line width=1.4] (1,1)--(1,2) (2,2)--(3,2) (2,1)--(3,1) (4,1)--(4,2);
\node[scale=.7] at (1.5, 1.5){$1$};
\node[scale=.7] at (2.5, 1.5){$2$};
\node[scale=.7] at (3.5, 1.5){$3$};
\end{tikzpicture}$};

\node at (4.5,6){$\begin{tikzpicture} [scale=.3]
\draw (1,1)--(4,1)--(4,2)--(1,2)--(1,1);
\draw (2,1)--(2,2);
\draw (3,1)--(3,2);
\draw[color=candyapplered, line width=1.4] (1,1)--(1,2) (2,1)--(2,2) (3,1)--(3,2) (4,1)--(4,2);
\node[scale=.7] at (1.5, 1.5){$1$};
\node[scale=.7] at (2.5, 1.5){$2$};
\node[scale=.7] at (3.5, 1.5){$3$};
\end{tikzpicture}$};

\node at (6,4){$\begin{tikzpicture} [scale=.3]
\draw (1,1)--(4,1)--(4,2)--(1,2)--(1,1);
\draw (2,1)--(2,2);
\draw (3,1)--(3,2);
\draw[color=candyapplered, line width=1.2] (1,1)--(2,1) (1,2)--(2,2) (3,1)--(3,2) (4,1)--(4,2);
\node[scale=.7] at (1.5, 1.5){$1$};
\node[scale=.7] at (2.5, 1.5){$2$};
\node[scale=.7] at (3.5, 1.5){$3$};
\end{tikzpicture}$};

\node at (3,4){$\begin{tikzpicture} [scale=.3]
\draw (1,1)--(4,1)--(4,2)--(1,2)--(1,1);
\draw (2,1)--(2,2);
\draw (3,1)--(3,2);
\draw[color=candyapplered, line width=1.2] (1,1)--(1,2) (2,1)--(2,2) (3,1)--(4,1) (3,2)--(4,2);
\node[scale=.7] at (1.5, 1.5){$1$};
\node[scale=.7] at (2.5, 1.5){$2$};
\node[scale=.7] at (3.5, 1.5){$3$};
\end{tikzpicture}$};

\node at (4.5, 2){$\begin{tikzpicture} [scale=.3]
\draw (1,1)--(4,1)--(4,2)--(1,2)--(1,1);
\draw (2,1)--(2,2);
\draw (3,1)--(3,2);
\draw[color=candyapplered, line width=1.2] (1,1)--(2,1) (1,2)--(2,2) (3,1)--(4,1) (3,2)--(4,2);
\node[scale=.7] at (1.5, 1.5){$1$};
\node[scale=.7] at (2.5, 1.5){$2$};
\node[scale=.7] at (3.5, 1.5){$3$};
\end{tikzpicture}$};

\path[color=darkpastelblue, line width=1] (4.5, 6.4) edge node [pos=.5, fill=white,outer sep=1mm,scale=.55]{$2$}  (4.5, 7.7);

\path[color=darkpastelblue, line width=.8] (4.4, 5.5) edge node [pos=.5, fill=white,outer sep=1mm,scale=.55]{$3$}  (3, 4.5);
\path[color=darkpastelblue, line width=.8] (4.6, 5.5) edge node [pos=.5, fill=white,outer sep=1mm,scale=.55]{$1$}  (6, 4.5);

\path[color=darkpastelblue, line width=.8] (3.1, 3.5) edge node [pos=.5, fill=white,outer sep=1mm,scale=.55]{$1$}  (4.4, 2.5);
\path[color=darkpastelblue, line width=.8] (5.9, 3.5) edge node [pos=.5, fill=white,outer sep=1mm,scale=.55]{$3$}  (4.6, 2.5);

\end{tikzpicture}
\]

\end{example}

\subsection{Abstract strings }
 
Let $\{ \to, \leftarrow\}$ be a set of  two letters where we refer to the first as a direct arrow, the second as an inverse arrow. An abstract string is a finite word in this alphabet or it is the additional word denoted by $\emptyset$. 

We call an abstract string consisting of only direct (resp. inverse) arrows a \emph{direct (resp. inverse) string. }

  We will see later in the paper that the correspondence between abstract  strings and abstract snake graphs as defined below is given by the arrow function defined in Section~\ref{subsec:string module of snake graph}.

\subsection{Construction of the snake graph of an abstract string}\label{Construction snake graph of w} 
Let $w = a_1 \ldots a_n $ be an abstract string with $a_i \in \{ \to, \leftarrow \}$ or $w =  \emptyset$. We iteratively construct a snake graph with $n+1$ tiles in the following way:
If $w = \emptyset$ then the corresponding abstract snake graph is given by a single tile. Otherwise if there is at least one letter, then $a_1 \ldots a_n$ is a concatenation of a collection of alternating  maximal direct and inverse strings $w_i$ such that $w = w_1 \ldots w_k$. We note that each $w_i$ might be of length 1.
\begin{enumerate}
\item For each $w_i$ we construct a zigzag snake graph $\calg_i$ with $\ell (w_i)+1$ tiles where $\ell(w_i)$ is the number of direct or inverse arrows in $w_i$. Let $\calg_i$ be the zigzag snake graph with  tiles $T^i_1, \ldots, T^i_{\ell (w_i)+1}$ such that $T^i_2$ is glued to the right (resp. on top) of $T^i_1$ if $w_i$ is direct  (resp. inverse). 
\item We now glue  $\calg_{i+1}$  to $\calg_i$, for all $i$,  by identifying the last tile $T^i_{\ell (w_i)+1}$ of $\calg_i$ and the first tile $T^{i+1}_1$ of $\calg_{i+1}$ such that $T^i_{\ell (w_i)},  T^i_{\ell (w_i)+1}, T^{i+1}_2$ is a straight piece.   
\end{enumerate}

 We call this abstract snake graph  \textit{the snake graph  $\calg (w)$ associated} to $w$.

\begin{example}
The snake graph $\calg(w)$ associated to the abstract string $ w=\to \to \to \leftarrow$ is given by $\begin{tikzpicture}[scale=.4]
\draw (0,0)--(2,0)--(2,2)--(4,2)--(4,1)--(0,1)--(0,0) (1,0)--(1,2)--(2,2) (3,1)--(3,2);
\node[color=red,rotate=0,scale=.8] at (1,.5){$\rightarrow$};
\node[color=red,rotate=90,scale=.8] at (1.5,1){$\rightarrow$};
\node[color=red,rotate=0,scale=.8] at (2,1.5){$\rightarrow$};
\node[color=red,rotate=0,scale=.8] at (3,1.5){$\leftarrow$};
\end{tikzpicture}$.
\end{example}

\begin{remark} Let $\iota :  \{ \to, \leftarrow, \emptyset \} \longrightarrow \{ \to, \leftarrow, \emptyset \}$ be the function defined by $\iota(\to) = \leftarrow$, $\iota (\leftarrow) = \to$ and $\iota(\emptyset) = \emptyset$. 
Given a sequence $w = a_1 \ldots a_n$ of arrows and inverse arrows, the inverse sequence is given by $\iota (w) = \iota(a_n) \ldots \iota(a_1)$.  So if $w$ is an abstract string, then $\calg(\iota(w))$ is obtained from $\calg(w) $ by reflecting it along $y=-x$.
\end{remark}

\subsection{String modules and snake graphs}

Let $K$ be a field, $Q$ a finite quiver, $I $ an admissible ideal in $KQ$ and let  $A=KQ/I$. We fix this notation throughout the paper. Denote by $Q_0$ the set of vertices of $Q$ with the convention that $Q_0 \subset \mathbb{N}$ and by $Q_1$ the set of arrows of $Q$. 
For $a \in Q_1$, let $s(a)$ be the start of $a$ and $t(a)$ be the end of $a$.

For each arrow $a$ in $Q_1$ we define the formal inverse $a^{-1}$ such that $s(a^{-1}) = t(a)$ and $t(a^{-1}) = s(a)$. A word $w  =
a_1 a_2 \ldots a_n$ is a \emph{string} if either $a_i$ or
$a_i^{-1}$ is an arrow in $Q_1$, if $s(a_{i+1}) = t(a_{i}),$  if
$a_{i+1} \neq a_{i}^{-1}$ for all $ 1 \leq i \leq n-1$ and if no subword of $w$ or its inverse is in $I$. Let $s(w) = 
s(a_1)$ and $t(w) = t(a_n)$.
Denote by $\mathcal{S_A}$ the set of strings modulo the equivalence relation $w \sim w^{-1}$, where $w$ is a string.

Given a string $w$ in $\cals_A$, we denote by $M(w)$ the corresponding string module over $A$.  Recall from \cite{BR, WW} that by definition a string module $M$ is given by an 
orientation of a type $\mathbb{A}$ Dynkin diagram where every vertex is replaced by a copy of $k$ and the arrows correspond to the identity maps. Note that $ M(w) \simeq  M(w^{-1})$.  The string module corresponding 
to a trivial string given by a vertex $i$ in $Q$ is the simple $A$-module corresponding to $i$. We call \emph{canonical embedding} of a submodule $N$ of $M$, the  injective map $N \to M$ induced by the identity on the non-zero components of $N$. For brevity, when we speak of a  submodule of a string module, we always mean a canonical submodule.

Given a string $w= a_1 \ldots a_n $ in $\cals_A$, let $v_1, \ldots, v_{n+1}$ be the vertices in $Q_0$,  that is $w = v_1 a_1 v_2 \ldots v_n a_n v_{n+1}$. We can view $w$ as an abstract string by forgetting the vertices and let $\calg$ be the abstract snake graph associated to $w$ as an abstract string.  After renumbering the tiles, this process gives a snake graph  with $n+1$ tiles $T_1, \ldots, T_{n+1}$ and we associate the face weight  $v_i$ to the tile $T_i$. We again call this snake graph with face weights  \textit{the snake graph  $\calg (w)$ associated} to $w$.

\begin{example}\label{Ex:Main}  We construct the snake graph associated to the string $w=1\rightarrow 2\rightarrow 3\rightarrow 4 \leftarrow 5\rightarrow 6$ with maximal alternating substrings $w_1=\rightarrow\rightarrow\rightarrow, w_2=\leftarrow$ and $w_3=\rightarrow$ below.

\[
\begin{tikzpicture}
\node at (1,0){$\begin{tikzpicture}[scale=.4]
\node at (-1.2,1){$\calg_1$};
\draw (0,0)--(2,0)--(2,2)--(3,2)--(3,1)--(0,1)--(0,0) (1,0)--(1,2)--(2,2);
\draw[fill=red!20!white] (2,1)--(2,2)--(3,2)--(3,1)--(2,1);
\end{tikzpicture}$};
\node at (3,0){$\begin{tikzpicture}[scale=.4]
\node at (-1.2,1){$\calg_2$};
\draw (0,0)--(1,0)--(1,2)--(0,2)--(0,0) (0,1)--(1,1);
\draw[fill=red!20!white] (0,0)--(1,0)--(1,1)--(0,1)--(0,0);
\draw[fill=blue!20!white] (0,1)--(1,1)--(1,2)--(0,2)--(0,1);
\end{tikzpicture}$};
\node at (5,0){$\begin{tikzpicture}[scale=.4]
\node at (-1.2,0.5){$\calg_3$};
\draw (0,0)--(2,0)--(2,1)--(0,1)--(0,0) (1,0)--(1,1);
\draw[fill=blue!20!white] (0,0)--(1,0)--(1,1)--(0,1)--(0,0);
\end{tikzpicture}
$};
\node at (6.5,0){$\longrightarrow$};
\node at (8,0){$\begin{tikzpicture}[scale=.4]
\draw (0,0)--(2,0)--(2,2)--(3,2)--(3,1)--(0,1)--(0,0) (1,0)--(1,2)--(2,2)
(3,1)--(5,1)--(5,2)--(3,2) 
(4,1)--(4,2);
\draw[fill=red!20!white] (2,1)--(2,2)--(3,2)--(3,1)--(2,1);
\draw[fill=blue!20!white] (3,1)--(3,2)--(4,2)--(4,1)--(3,1);
\end{tikzpicture}$};
\node at (10,0){$\longrightarrow$};
\node at (12,0){$\begin{tikzpicture}[scale=.4]
\node at (-1.7,1){$\calg(w)$};
\draw (0,0)--(2,0)--(2,2)--(3,2)--(3,1)--(0,1)--(0,0) (1,0)--(1,2)--(2,2)
(3,1)--(5,1)--(5,2)--(3,2) 
(4,1)--(4,2);
\node[scale=.8] at (.5,.5){$1$};
\node[scale=.8] at (1.5,.5){$2$};
\node[scale=.8] at (1.5,1.5){$3$};
\node[scale=.8] at (2.5,1.5){$4$};
\node[scale=.8] at (3.5,1.5){$5$};
\node[scale=.8] at (4.5,1.5){$6$};
\end{tikzpicture}$};
\end{tikzpicture}
\]
\end{example}

\begin{remark}\label{tops and socles} Let $\calg(w)$ with tiles $T_1, \ldots, T_{n+1}$ be a snake graph associated to a string module $M(w)$ given by the zigzags $\calg_1, \ldots, \calg_k$.
For all $i$,  $\calg_i \cap \calg_{i+1}$ is a tile corresponding to a simple module.  Furthermore, each such simple module is either in the top or socle of $M(w)$ and if we add the simples corresponding to $T_1$ and $T_{n+1}$ then any simple in the top or socle of $M(w)$ is of that form.
\end{remark}

\subsection{Construction of the string of a snake graph}\label{subsec:string module of snake graph}
 
Given a snake graph $\calg$ with consecutive tiles $T_1, \ldots, T_{n+1}$, we define a function 
\[
f_+ : \{ (T_1, T_2), (T_2, T_3), \ldots, (T_{n}, T_{n+1}) \} \longrightarrow \{\to,\leftarrow \}
\] 
recursively as follows: Set  $f_+(T_1, T_2) =\to$. Now suppose that $(T_{j-1}, T_j) \mapsto \varepsilon$, for  $\varepsilon \in \{\to,\leftarrow\}$,  then define   
\[
(T_j, T_{j+1})  \mapsto \left\{ \begin{array}{ll}
\varepsilon & \mbox{ if $T_j, T_{j+1}, T_{j+2}$ is a zigzag } \\
 \varepsilon^c  &  \mbox{ if $T_j, T_{j+1}, T_{j+2}$ is a straight piece }
\end{array} \right.
\]
 where $\varepsilon^c$ is the complement of $\varepsilon$ in $\{\to,\leftarrow\}$.

We define the function $f_-: \{ (T_1, T_2), (T_2, T_3), \ldots, (T_{n}, T_{n+1}) \} \longrightarrow \{\to,\leftarrow \}$ by setting $f_-(T_1, T_2) = \leftarrow$ and proceeding as above. 

We call $f_+$ and $f_-$ \textit{arrow functions} of $\calg$. Note that these functions are referred to as sign functions in \cite{CS1} and they recover the abstract string of a snake graph.

\begin{example}\label{Ex:ArrowFnc} The arrow function $f_+$ of the snake graph $\calg=\calg(w)$ in \emph{Example}~\ref{Ex:Main} is given by $(\to,\to,\to,\leftarrow,\to)$ and the arrow function $f_-$ is given by $(\leftarrow,\leftarrow,\leftarrow,\to,\leftarrow).$ 
\end{example}

Suppose that $\calg$ has face weights $v_1, \ldots, v_{n+1}$. Define a string $w^+= a_1  a_2 \ldots  a_n $ by defining  $a_i$ to be an arrow   with $s(a_i) = v_i$ and $t(a_i) = v_{i+1}$ if $f_+(T_i, T_{i+1}) =\to$ or   if $f_+(T_i, T_{i+1}) =\leftarrow$
 by defining $a_i$ to be the formal inverse of an arrow with  $s(a_i) = v_{i+1}$ and $t(a_i) = v_{i}$. 
 
Define the string $w^-$ in an analogous way using the arrow function $f_-$. 

We call $w^+$ and $w^-$ the \textit{abstract strings  associated} to $\calg$.  
 
The lemma below immediately follows from the definitions above. 
 
 \begin{lemma}\label{lem:SG-string}
 Let $w = a_1 \ldots a_n$ be a string in $\cals_A$ and let $\calg=\calg (w)$ be the associated snake graph. If $a_1 \in Q_1$ then the string $w^+$ associated to $\calg$ is equal to $w$ and if $a_1^{-1} \in Q_1$  then the string $w^-$ associated to $\calg$ is equal to $w$.  
 \end{lemma}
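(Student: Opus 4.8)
The plan is to exploit that $\calg(w)$ remembers exactly two things about $w$: the sequence of face weights, and the shape (the pattern of ``right'' versus ``top'' gluings). By the construction in Section~\ref{Construction snake graph of w}, the tiles $T_1,\dots,T_{n+1}$ carry the vertices $v_1,\dots,v_{n+1}$ of $w$ as face weights, read in order; and in the reconstruction of Section~\ref{subsec:string module of snake graph} the $i$-th letter of $w^+$ (resp.\ $w^-$) is an arrow between $v_i$ and $v_{i+1}$ that is taken direct or inverse according as $f_+(T_i,T_{i+1})$ (resp.\ $f_-(T_i,T_{i+1})$) equals $\to$ or $\leftarrow$. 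So the lemma reduces to a matching of directions: it is enough to show that, when $a_1\in Q_1$, one has $f_+(T_i,T_{i+1})=\to$ precisely when $a_i\in Q_1$, for every $1\le i\le n$; the statement for $f_-$ when $a_1^{-1}\in Q_1$ is symmetric.

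To prove this I would first unwind the construction of $\calg(w)$. Writing $w=w_1\dots w_k$ for its decomposition into maximal alternating direct and inverse substrings and $\calg_1,\dots,\calg_k$ for the associated zigzags, the renumbered gluings $(T_1,T_2),\dots,(T_n,T_{n+1})$ are in order-preserving bijection with the arrows $a_1,\dots,a_n$, the gluings internal to $\calg_j$ matching the arrows of $w_j$. The crucial point, read off from clauses (1) and (2) of that construction, is the following dichotomy: for $i\ge 2$ the triple $T_{i-1},T_i,T_{i+1}$ is a \emph{zigzag} if $a_{i-1}$ and $a_i$ belong to the same $w_j$ (because each $\calg_j$ is a zigzag), and is a \emph{straight piece} if $a_{i-1}$ is the last arrow of some $w_j$ and $a_i$ the first arrow of $w_{j+1}$ (because then $T_i=T^{j}_{\ell(w_j)+1}$ is the shared tile, $T_{i-1}=T^{j}_{\ell(w_j)}$ and $T_{i+1}=T^{j+1}_2$, and (2) makes exactly this triple straight).

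With this in hand I would argue by induction on $i$. For $i=1$ we have $f_+(T_1,T_2)=\to$ and $a_1\in Q_1$, so the claim holds. For the step, observe that the direction of $a_i$ coincides with that of $a_{i-1}$ if and only if $a_{i-1}$ and $a_i$ lie in a common $w_j$ --- inside a maximal substring all letters share one direction, while across a junction the direction reverses, since the $w_j$ alternate between direct and inverse. By the dichotomy, this is exactly the case in which $T_{i-1},T_i,T_{i+1}$ is a zigzag, i.e.\ exactly the case in which the defining recursion keeps $f_+(T_i,T_{i+1})=f_+(T_{i-1},T_i)$ rather than replacing it by its complement. Hence the direction of $a_i$ differs from that of $a_{i-1}$ if and only if $f_+(T_i,T_{i+1})$ differs from $f_+(T_{i-1},T_i)$; since the two agree at $i=1$, they agree throughout, which gives $w^+=w$. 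When $a_1^{-1}\in Q_1$ the same induction applies verbatim with $f_-$ in place of $f_+$ (its recursion differs from that of $f_+$ only in the base value), or one may instead apply the case just treated to $\iota(w)$ and use that $\calg(\iota(w))$ is the reflection of $\calg(w)$ in the line $y=-x$.

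\textbf{Main obstacle.} Everything above is essentially bookkeeping; the one step that genuinely needs care is the dichotomy in the second paragraph --- turning clauses (1)--(2) of the construction into the precise statement that consecutive arrows in the same maximal substring produce zigzag triples while arrows straddling a substring boundary produce straight-piece triples, in particular handling correctly the junction tiles $\calg_j\cap\calg_{j+1}$.
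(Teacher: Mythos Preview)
Your proposal is correct and is precisely the kind of direct verification the paper has in mind: the paper's own ``proof'' consists of the single sentence ``The lemma below immediately follows from the definitions above,'' and your argument is exactly the unpacking of those definitions that makes this claim honest. Your dichotomy and induction are the right way to turn the recursive definition of $f_\pm$ and the zigzag/straight-piece structure of $\calg(w)$ into a proof, so there is nothing to add.
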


 In view of Lemma~\ref{lem:SG-string} we freely interchange strings and snake graphs.

\begin{example} The string $w^+$ associated to the snake graph $\calg$ in \emph{Example}~\ref{Ex:Main} is given by $1 \rightarrow 2 \rightarrow 3\rightarrow 4\leftarrow 5\rightarrow 6$ and agrees with $w.$
\end{example}

\subsection{Maximal and minimal perfect matchings and symmetric differences}\label{min max sym diff}  Let $w$ be an abstract string, define the \textit{minimal perfect matching} $P_{min}$ of the snake graph $\calg (w)$ to be the matching  consisting only of boundary edges and such that if locally we have the following sequence $\rightarrow \leftarrow$ in $w$ then  the corresponding tile in $\calg(w)$ has two boundary edges in the matching.   If $w=\to$ ($w=\leftarrow$) is given by a single letter then the \emph{minimal perfect matching} $P_{min}$ of the corresponding two tile snake graph $\calg(w)$ contains two boundary edges on the first (second) tile. The maximal perfect matching $P_{max}$ of $ \calg$ is given by the perfect matching consisting of all boundary edges not in $P_{min}$. Clearly $P_{max}  \cup P_{min}$ corresponds to the set of  all  boundary edges of $\calg$.

\begin{remark}
In terms of string modules, if $w \in \cals_A$ and if a vertex $v$ in $w$ corresponds to a simple  in the socle of $M(w)$  then the corresponding tile in the minimal perfect matching of $\calg$ has two boundary edges in the matching.  
\end{remark}

We define the symmetric difference  $P \ominus P'$ of any two matchings $P$ and $P'$ of $\calg$  to be the set of edges of $\calg$  given by $(P \cup P') \setminus ( P \cap P')$.

It follows from \cite{MSW1} that $P \ominus P_{min}$ gives rise to a set of enclosed tiles of $\calg$ and  we denote by $\cup \calh_i$ the union of the snake subgraphs of $\calg$ corresponding to the tiles enclosed by $P \ominus P_{min}$. This gives a canonical embedding $\varphi$ of $P \ominus  P_{min} = \cup \calh_i$ into $\calg$.

\begin{remark}\label{remark correspondence} 
Given a perfect matching of a snake graph $\calg$ corresponding to an arc in a triangulation of an oriented marked surface and given  the corresponding minimal or maximal perfect matching, in \cite{MSW1} the symmetric difference is used to determine the $y$-coefficients of the cluster variable associated to the arc. This  thereby also  gives dimension vectors of submodules of the string modules associated to the arc and the snake graph $\calg$. 
\end{remark}

\section{Perfect matching lattices, canonical submodule lattices and intervals in the weak Bruhat order}

Let $\calg$ be a snake graph with face weights $\{v_1, \ldots, v_n \} \subset \mathbb{N} $. Define a function  $h : \calg \to \mathbb{N}^k$ given by $h(\calg) =  (n_1, \ldots, n_k) $ where  $n_i$ equals the number of tiles of face weight $i$ in $\calg$. We call $h$ the \textit{face   function} of $\calg$. 

\subsection{Perfect matchings of a snake graph and submodules}\label{sec::pm of a snake graph and submodules}

In the context of modules and submodules, the correspondence in Remark~\ref{remark correspondence}  can be made more explicit.  Namely,

\begin{proposition}\label{Matching to submodule}
Let $A=KQ/I$ with $I$ admissible in $KQ$. Let $w$ be a string, $M(w)$ the corresponding string  module over $A$ and let $P$ be a perfect matching of the snake graph $\calg (w)$.    Then $P \ominus P_{min}$ gives rise to the canonical embedding  $\varphi: M(P) \hookrightarrow M(w)$   such that $h(P \ominus  P_{min})$ is the dimension vector of the  submodule $M(P)$ of $M(w)$.
\end{proposition}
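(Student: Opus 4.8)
The plan is to set up an explicit dictionary between the tiles enclosed by the symmetric difference $P\ominus\Pm$ and the basis vectors of a submodule of $M(w)$, and then to check that this dictionary is compatible with the $A$-action. First I would recall the concrete model of $M(w)$: fixing the string $w = v_1 a_1 v_2 \ldots a_n v_{n+1}$, the module $M(w)$ has a basis $\{z_1,\ldots,z_{n+1}\}$ indexed by the vertices $v_i$ (equivalently, by the tiles $T_1,\ldots,T_{n+1}$ of $\calg(w)$ under the face-weight correspondence), with the arrows of $Q$ acting as the identity along each edge $a_i$ of the string and as zero otherwise. A canonical submodule is then exactly a subset $S\subseteq\{1,\ldots,n+1\}$ of basis vectors that is closed under the arrow action, i.e. closed under "going down" in the string poset; equivalently it is a \emph{successor-closed} (down-closed toward sinks) subset of the $\mathbb{A}_{n+1}$-orientation. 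So the proposition reduces to showing that the tile set enclosed by $P\ominus\Pm$ is precisely such a subset, and that $h$ of it records the multiplicities of the $v_i$, which is immediate once the tile set is identified since $h$ just counts face weights.

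Next I would analyze $P\ominus\Pm$ combinatorially. By the cited result from \cite{MSW1}, $P\ominus\Pm$ is a disjoint union of boundaries of connected "bands" of tiles, so it does enclose a well-defined set of tiles, giving the canonical embedding $\varphi$ of $\cup\calh_i$ into $\calg$ as already noted in Section~\ref{min max sym diff}. The key point to verify is \emph{which} tiles these are in module-theoretic terms. I would argue locally: by the construction of $\Pm$ in Section~\ref{min max sym diff}, a tile $T_i$ lies in $\Pm$ with two boundary edges exactly when $v_i$ is a simple in the socle of $M(w)$ (this is the Remark following the definition of $\Pm$). Turning a tile from its $\Pm$-matching to the other local matching corresponds to including the corresponding basis vector together with everything below it; chaining these local moves along the snake graph, the set of tiles enclosed by $P\ominus\Pm$ is exactly the successor-closure of the tiles where $P$ and $\Pm$ differ. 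Using Remark~\ref{tops and socles}, which identifies the simples in the top and socle of $M(w)$ with the gluing tiles $\calg_i\cap\calg_{i+1}$ together with $T_1$ and $T_{n+1}$, I can match up the "enclosed band" structure of $P\ominus\Pm$ with the structure of a canonical submodule: each band corresponds to a subinterval of the string running from a socle vertex upward, and a union of such bands is precisely a down-closed (toward sinks) subset.

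Having identified the tile set, I would define $M(P)$ to be the string module $M(w_P)$ where $w_P$ is the substring (or disjoint union of substrings) of $w$ cut out by $\cup\calh_i$ — this is well defined because each $\calh_i$ is a connected snake subgraph hence corresponds to a genuine substring of $w$, lying in $\cals_A$ automatically since no subword of a string lies in $I$. The canonical embedding $\varphi\colon M(P)\hookrightarrow M(w)$ is then the identity on the basis vectors indexed by the enclosed tiles; it is $A$-linear precisely because the enclosed tile set is arrow-closed, which is the content of the previous paragraph. Finally $h(P\ominus\Pm) = h(\cup\calh_i)$ counts, for each vertex $i\in Q_0$, the number of enclosed tiles of face weight $i$, which is exactly the multiplicity of the simple $S_i$ in $M(P)$, i.e. the $i$-th entry of the dimension vector $\underline{\dim}\,M(P)$.

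The main obstacle I expect is the bookkeeping in the second paragraph: making rigorous the claim that "enclosed by $P\ominus\Pm$" equals "successor-closure of the differing tiles," including the boundary cases at $T_1$ and $T_{n+1}$ and the single-letter strings $w=\to$, $w=\leftarrow$ where $\Pm$ is defined by an ad hoc rule. I would handle this by an induction on the number of tiles (or on the number of maximal direct/inverse substrings $w_i$), peeling off one zigzag block $\calg_k$ at a time and using the straight-piece gluing condition from Section~\ref{Construction snake graph of w} to control how a matching restricts across the gluing tile, together with the observation that rotating a single tile in $\calg$ corresponds in $\call(\calg)$ to covering relations labelled by face weights — so the lattice structure is compatible with adding one basis vector's worth of successor-closure at a time. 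Once the local picture at a gluing tile (which is always a simple socle or top summand by Remark~\ref{tops and socles}) is pinned down, the global statement follows by concatenation.
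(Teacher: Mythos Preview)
Your high-level strategy --- identify the enclosed tile set of $P\ominus\Pm$ with a down-closed (toward sinks) subset of the string poset, hence with a canonical submodule --- is sound, and the treatment of $h$ at the end is fine. But the second paragraph is imprecise at the crucial point. The sentence ``turning a tile from its $\Pm$-matching to the other local matching corresponds to including the corresponding basis vector together with everything below it'' is not correct: a single tile rotation adds exactly one tile to the enclosed set, not its down-closure. What you actually need is the local constraint that a tile can be rotated upward only when every neighbouring tile lying below it in the string orientation is already enclosed; that is what forces the enclosed set to remain an order ideal throughout a chain of rotations. Likewise ``the set of tiles enclosed by $P\ominus\Pm$ is exactly the successor-closure of the tiles where $P$ and $\Pm$ differ'' is either tautological or confused --- the enclosed set \emph{is} determined by where they differ, and the assertion should simply be that this set is itself successor-closed. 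Your proposed induction on zigzag blocks would establish this, but as written the argument before it does not.

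The paper proceeds differently and avoids the lattice-walk altogether. It works directly with each connected component $\calh_j$ of the enclosed region and, using the arrow function inherited from $\calg(w)$, performs an explicit case analysis on the first maximal zigzag of $\calh_j$ (splitting on whether $f_j(T_1,T_2)=\to$ or $\leftarrow$ and on the parity of the zigzag length) to locate a tile of $\calh_j$ carrying two boundary edges of $\Pm$, i.e.\ a socle simple of $M(w)$. A symmetric argument from the last tile $T_{n_j}$ handles the other end, so the substring $w_j$ associated to $\calh_j$ really is a submodule of $M(w)$. This is more hands-on but self-contained: it uses only the definition of $\Pm$ and the zigzag decomposition of Section~\ref{Construction snake graph of w}, not the lattice structure of $\match(\calg)$. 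Your approach, once the local rotation constraint is stated correctly, is more conceptual and anticipates the order-ideal picture developed later; the paper deliberately establishes this proposition first so that Theorem~\ref{lattice correspondence} can rest on it rather than the other way round. Your induction-on-blocks plan, incidentally, would unwind to essentially the same zigzag case analysis the paper carries out.
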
 

\begin{proof} 
We have that $P \ominus P_{min}  = \cup \calh_j $ where each $\varphi_i : \calh_j \hookrightarrow \calg(w)$  is  a  canonical embedding of  snake graphs.  Let $T_1, \ldots,  T_{n_j}$ be the tiles of $\calh_j$. Furthermore, the arrow function $f$ of $\calg (w)$   is determined by whether the first letter in $w$ is a direct or inverse arrow. This induces an arrow function $f_j$  for each $\calh_j$ such that $f_j = g_+$ if $f\vert_{\calh_j} (T_1, T_2) = \to$ and $f_j = g_-$ if $f\vert_{\calh_j} (T_1, T_2) = \leftarrow$ where $g_{\pm}$ is the arrow function of $\calh_j$. Let $w_j$ be the string associated to $\calh_j$ determined by $f_j$. 

 We will show that each $M (w_j)$ is a submodule of $M(w)$. 

We prove that each $\calh_j$ contains at least one socle of $M(w)$.  If $n_j=1$ then $\calh_j$ consists of a single tile $T_1$, so $P_{min} \vert_{T_1} \cup P \vert_{T_1}$ matches all boundary edges of $T_1$ and by definition, the corresponding simple is in the socle of $M(w)$. Now let $n_j \geq 2$ and suppose that $f_j = g_+$, that is $f_j(T_1, T_2) = \to$. Then the simple corresponding to $T_1$ is not a socle in $M(w)$. Therefore $P_{min} \vert_{T_1}$ must be a single edge of $T_1$. Now if $ n_j=2$ or if $T_1, T_2, T_3$ is a straight piece then since $P_{min}$ is a perfect matching with only boundary edges,  we have that $P_{min} \vert_{T_2}$ matches two boundary edges of $T_2$ and by definition $T_2$ corresponds to a socle of $M(w)$. 
So suppose that $T_1, \ldots, T_k$ is a maximal zigzag piece then either $ k = n_j$ or $T_{k-1}, 
T_k, T_{k+1}$ is a straight piece. Furthermore, without loss of generality we can assume that $T_2$ is to the right of $T_1$ and $P_{min} \vert_{T_2}$ is at the bottom of $T_2$. That is  $P_{min}$  matches the bottom edges of $T_2, \ldots, T_{2i}$ and the left edges of $T_1, \ldots, T_{2i-1}$.  If $k$ is even then the top and bottom of $T_k$ are boundary edges and  $P_{min} \vert_{T_k}$ matches the top and bottom boundary edges of  $T_k$ and if  $k$ is odd then the left and right of $T_k$ are boundary edges and  $P_{min} \vert_{T_k}$ matches the left and right boundary edges of $T_k$ and therefore $T_k$ corresponds to a simple in the socle of $M(w)$.

  Now suppose that $f_j = g_-$, that is $f_j(T_1, T_2) = \leftarrow$. Now suppose that $ n_j =1$. Then clearly as above the  corresponding simple lies in the socle of $M(w)$. 
So suppose that $T_1, \ldots, T_k$ is a maximal zigzag piece. Furthermore, without loss of generality we can assume again that $T_2$ is to the right of $T_1$.  Then $f_j(T_{k-1}, T_k) = \leftarrow$. Then this implies by our convention that $P_{min}$ does not match two boundary edges of $T_k$. Then if $k$ is even, $T_{k}$ is to the right of $T_{k_1}$ and the top edge of $T_{k_1}$  is a boundary edge which is matched in $P_{min}$ and the top edges of $T_{k-3}, \ldots, T_1$ are  boundary edges which are matched in $\Pm$. Since $T_1$ is to the left of $T_2$, the bottom edge of $T_1$ is also a boundary edge that is matched in $\Pm$. Hence it corresponds to a simple in the socle of $M(w)$. A similar argument shows that the top and bottom edges of $T_1 $ are matched in $\Pm$ if $k$ is odd.

By symmetry an analogous argument starting with $ T_{n_j}$ implies that $N(w_j)$ is a submodule of $M(w)$. Furthermore, each canonical embedding $\varphi_i$ determines a tile $T_{j_i}$ of $\calg$ which corresponds to the first tile of $\calh_i$ as a subgraph of $\calg$. The tile $T_{j_i}$ corresponds simultaneously to $s(w_i)$ and some vertex in $w$, identifying these two vertices gives rise to an embedding $N(w_i) \hookrightarrow M(w)$ for each $i$, in turn defining  the canonical embedding $\varphi: N \hookrightarrow M(w)$. 
\end{proof}

\begin{remark}\label{rem:MaxMatch} In Proposition~\ref{Matching to submodule} if $P = P_{max}$ then $M(P) = M(w)$ and we refer this module also as the string module $M(\calg)$ associated to the snake graph $\calg$. 
\end{remark}

\begin{definition} 
We call the submodule $M(P)$ in Proposition~\ref{Matching to submodule} the {\it canonical submodule associated to $P$.} 
\end{definition} 

Conversely,  given a string module and a canonical submodule, we want to associate a perfect matching. To start with let $N = N(w_1) \oplus \cdots \oplus  N(w_n)$ be a canonical submodule of a string module $M(w)$  with canonical embedding $\varphi$. Let $\calh_i$ be a snake subgraph of $\calg = \calg (w)$ given by $w_i$  corresponding to the embedding $\varphi$. Note that $\calh_i$ is not necessarily unique, if $w$ contains several copies of $w_i$. Denote the maximal perfect matching of $\calh_i$ by $P_{max}(\calh_i)$.   Set  
\[
P_{\varphi} = (\bigcup_{i=1}^n P_{max} (\calh_i)) \cup \Pm \vert_{\calg \setminus (\bigcup_{i=1}^n \calh_i)}. 
\]

\begin{proposition}\label{Submodule to matching}
Keeping the notation above, the set of edges $P_{\varphi}$ of $\calg$ is a perfect matching of $\calg$ and $N = M(P_\varphi)$.
\end{proposition}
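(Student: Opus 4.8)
The plan is to verify directly from the construction of $P_\varphi$ that (i) every vertex of $\calg$ is incident to exactly one edge of $P_\varphi$, so $P_\varphi\in\match(\calg)$, and (ii) when we feed $P_\varphi$ back into Proposition~\ref{Matching to submodule}, the symmetric difference $P_\varphi\ominus\Pm$ recovers exactly the union of the snake subgraphs $\calh_i$, hence $M(P_\varphi)=N$. The two statements are really the same bookkeeping done twice, so I would set up the local picture once and reuse it.

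First I would record that, by definition of $\Pm$ and of the embeddings $\varphi$, each $\calh_i$ is an \emph{enclosed} snake subgraph of $\calg$: the tiles of $\calh_i$ form a contiguous block $T_{j_i},\dots,T_{j_i+n_i-1}$, and crucially the first and last tiles of $\calh_i$ correspond to simples in the socle of $M(w)$ (this is exactly what the long argument in the proof of Proposition~\ref{Matching to submodule} established, phrased there for a single $\calh_j$). Consequently, on the boundary of the region $\bigcup_i\calh_i$ inside $\calg$, the matching $\Pm$ restricted to the two end tiles of each $\calh_i$ uses the two boundary edges of those tiles that are shared with the complement $\calg\setminus\bigcup_i\calh_i$ — equivalently, $\Pm\vert_{\calh_i}=\Pm(\calh_i)$, the minimal matching of the abstract snake graph $\calh_i$ in its own right. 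This is the compatibility fact that makes the piecewise definition of $P_\varphi$ glue correctly.

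Given this, the well-definedness in (i) is a gluing check along the common edges between $\bigcup_i\calh_i$ and its complement. A vertex $v$ interior to some $\calh_i$, or interior to the complement, is matched exactly once because $P_{max}(\calh_i)$, resp. $\Pm\vert_{\calg\setminus\bigcup\calh_i}$, is (the restriction of) a perfect matching there. A vertex $v$ on a shared edge $e$ lies on an end tile $T$ of some $\calh_i$ and also in the complement; I claim $e\notin P_{max}(\calh_i)$ and $e\notin\Pm\vert_{\calg\setminus\bigcup\calh_i}$ — the first because $e$ is an edge of $\calh_i$ that lies in $\Pm(\calh_i)$ (end tile is a socle tile, so $\Pm$ matches its two outer boundary edges, which are precisely the edges shared with the complement), so $e$ is a boundary edge of $\calh_i$ \emph{not} in $P_{max}(\calh_i)$; the second because $e$ is not an edge of $\calg\setminus\bigcup\calh_i$ at all (it is shared, but as an edge of that subgraph it is present — here I need to be careful: $e$ is a boundary edge of the complement, and $\Pm$ on the complement matches $v$ via some other edge, since $v$'s match in the global $\Pm$ is an edge \emph{inside} $\calh_i$). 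So $v$ is matched by exactly the one edge of $P_{max}(\calh_i)$ incident to it inside $\calh_i$. The same analysis at the two extreme tiles $T_1,T_{n+1}$ of $\calg$, and at junctions of two different $\calh_i$'s (which cannot be adjacent, as enclosed subgraphs coming from a direct sum decomposition are separated by at least one complement tile — or if they abut, the shared tile is a socle tile and the argument is identical), finishes (i).

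For (ii): $P_\varphi\ominus\Pm$ is supported exactly on $\bigcup_i\calh_i$, since outside we defined $P_\varphi=\Pm$, and inside $\calh_i$ it equals $P_{max}(\calh_i)\ominus\Pm(\calh_i)$, which is the full set of enclosed tiles of $\calh_i$, i.e. all of $\calh_i$. Hence the enclosed-tile set of $P_\varphi\ominus\Pm$ is $\bigcup_i\calh_i$ with the canonical embedding $\varphi$, and by Proposition~\ref{Matching to submodule} together with the construction of the string $w_i$ from the arrow function of $\calh_i$, the resulting submodule $M(P_\varphi)$ is $\bigoplus_i N(w_i)=N$. I would close by noting that the arrow function $f_i$ induced on $\calh_i$ by $f\vert_{\calh_i}$ in Proposition~\ref{Matching to submodule} is the same one used to define $w_i$ here, so the two descriptions of the summand agree on the nose.

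The main obstacle is the boundary/gluing bookkeeping in (i): one has to be genuinely careful about which edges at the interface tiles belong to $P_{max}(\calh_i)$ versus $\Pm$ on the complement, and handle the parity cases (zigzag vs. straight at the interface, $k$ even vs. odd) the way the proof of Proposition~\ref{Matching to submodule} does. Everything else is essentially unwinding definitions.
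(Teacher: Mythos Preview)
Your overall architecture is the same as the paper's: establish that $\Pm\vert_{\calh_i}=\Pm(\calh_i)$, then argue that replacing this piece by $P_{max}(\calh_i)$ still gives a perfect matching, and finally compute the symmetric difference. However, your justification for the key compatibility fact is wrong.

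The claim that ``the first and last tiles of $\calh_i$ correspond to simples in the socle of $M(w)$'' is false. Take $w=1\to 2\to 3$ and the canonical submodule $N=M(2\to 3)$; then $\calh_1$ consists of the tiles $T_2,T_3$, and $T_2$ corresponds to the vertex $2$, which is \emph{not} in the socle of $M(w)$. Moreover, the proof of Proposition~\ref{Matching to submodule} does not establish what you say it does: there the $\calh_j$ arise from a symmetric difference $P\ominus\Pm$, and the argument shows only that each $\calh_j$ \emph{contains} a socle tile, not that its end tiles are socle tiles. Your subsequent gluing claims inherit this error; in the same example the shared edge $e$ (the left edge of $T_2$) actually lies in $P_{max}(\calh_1)$, contrary to your assertion.

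What is true, and what the paper proves directly, is the equality $\Pm\vert_{\calh_i}=\Pm(\calh_i)$. The paper's argument is a case analysis on the arrow function $f_i$ at the first tile of $\calh_i$, and the submodule condition enters essentially: if the first arrow of $w_i$ is direct (so $T_j$ is not a socle tile), one tracks $\Pm$ along the initial zigzag of $\calh_i$ down to the first socle tile and then checks the interface edge; the fact that $N(w_i)$ is a submodule forces the arrow $a_{j-1}$ in $w$ just outside $\calh_i$ to be direct, which rules out the one bad configuration. You gesture at ``parity cases'' in your final paragraph, but the actual content of those cases---in particular the use of the submodule hypothesis at the interface---is missing from your argument and cannot be imported wholesale from Proposition~\ref{Matching to submodule}.
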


\begin{proof}

If we prove that  $P_{min} \vert_{\calh_i}$ is a perfect matching of $\calh_i$ then 
all vertices of $\calh_i$ are matched by matching edges belonging to $\calh_i$. Therefore we can replace  $P_{min} \vert_{\calh_i}$ inside $P$  by any other matching of $\calh_i$ 
and the result is still a perfect matching of $\calg$. In particular this is true if we replace $P_{min} \vert_{\calh_i}$ by $P_{max} (\calh_i)$.

We now prove that $P_{min} \vert_{\calh_i}$ is the minimal perfect matching of  $\calh_i$. Let $T_1, \ldots, T_n$ be the tiles of $\calg$ and let $T_j, \ldots, T_{j+l+1}$ be the tiles of $\calh_i$ considered as a subgraph of $\calg$. Without loss of generality assume that $T_{j+1}$ is to the right of $T_j$.
Let $f_i$ be the sign function on $\calh_i$ induced by $w_i$. 

If $f_i$ is such that $f_i(T_j, T_{j+1}) = \to$ then this implies that $T_j$ does not correspond to a simple in the socle of $M(w)$ (or $N(w_i)$).  So there must be a maximal zigzag piece $T_j, \ldots, T_k$ in $\calh_i$ such that $T_{k}$ corresponds to a simple in the socle of $M(w)$. If $T_{k}$ is to the right of $T_{k-1}$ then the top and bottom edges of $T_k$ are matched in $\Pm(\calg)$ and in $\Pm(\calh_i)$ and $k$ is even. This implies that the bottom edge of $T_{j+1}$ is also matched both in    $\Pm(\calg)$ and in $\Pm(\calh_i)$. Furthermore, the left edge of $T_j$ in $\Pm(\calh_i)$ is matched. If  it is not matched in $\Pm(\calg)$ then $T_{j-1}$ is to the left of $T_j$ and $f(T_{j-1}, T_j) = \leftarrow$ where $f$ is the arrow function of $\calg$. This implies that $T_j$ corresponds to a simple in the top of $\calg$ and this is a contradiction since $T_j$ corresponds to the start of the string of a submodule and $j \neq 1$. 

If $f_i$ is such that $f_i(T_j, T_{j+1}) = \leftarrow$ then this implies that $T_j$  corresponds to a simple in the socle of  $N(w_i)$. But any simple in the socle in $N(w_i)$ is in the socle of $M(w)$.  Therefore $T_j$ has two boundary edges matched and  $P_{min} \vert_{\calh_i}$ is the minimal perfect matching of  $\calh_i$. 

To prove the last statement, observe that 
by the definition of $P_\varphi$, we have that $\Pm \ominus P_\varphi = \bigcup_{i=1}^n P_{max} (\calh_i)$. By remark~\ref{rem:MaxMatch} $M( P_{max} (\calh_i)) = N(w_i)$ for all $i$.
\end{proof}

\begin{example}
Consider the snake graph 
$\calg_w=\begin{tikzpicture}[scale=.4]
\draw (0,0)--(2,0)--(2,2)--(4,2)--(4,1)--(0,1)--(0,0) (1,0)--(1,2)--(2,2) (3,1)--(3,2);
\node[scale=.8] at (.5,.5){$1$};
\node[scale=.8] at (1.5,.5){$2$};
\node[scale=.8] at (1.5,1.5){$3$};
\node[scale=.8] at (2.5,1.5){$4$};
\node[scale=.8] at (3.5,1.5){$5$};
\end{tikzpicture}$ associated to the string $w=1\rightarrow 2 \rightarrow 3 \rightarrow 4 \leftarrow 5$. The submodules of $M_w$ corresponding to the perfect matchings 
$\begin{tikzpicture}[scale=.4]
\draw (0,0)--(2,0)--(2,2)--(4,2)--(4,1)--(0,1)--(0,0) (1,0)--(1,2)--(2,2) (3,1)--(3,2);
\draw[color=red,ultra thick] (0,0)--(0,1) (1,0)--(2,0) (1,1)--(2,1) (1,2)--(2,2) (3,1)--(3,2) (4,1)--(4,2);
\node[scale=.8] at (.5,.5){$1$};
\node[scale=.8] at (1.5,.5){$2$};
\node[scale=.8] at (1.5,1.5){$3$};
\node[scale=.8] at (2.5,1.5){$4$};
\node[scale=.8] at (3.5,1.5){$5$};
\end{tikzpicture}$
and
$\begin{tikzpicture}[scale=.4]
\draw (0,0)--(2,0)--(2,2)--(4,2)--(4,1)--(0,1)--(0,0) (1,0)--(1,2)--(2,2) (3,1)--(3,2);
\draw[color=red,ultra thick] (0,0)--(0,1) (1,0)--(2,0) (1,1)--(1,2) (2,1)--(2,2) (3,1)--(3,2) (4,1)--(4,2);
\node[scale=.8] at (.5,.5){$1$};
\node[scale=.8] at (1.5,.5){$2$};
\node[scale=.8] at (1.5,1.5){$3$};
\node[scale=.8] at (2.5,1.5){$4$};
\node[scale=.8] at (3.5,1.5){$5$};
\end{tikzpicture}$
are $3 \rightarrow 4$ and the simple module $4$, respectively. 

\end{example}

\begin{definition}\label{def::canonical submodule lattice} Let $A$ be a finite dimensional $K$-algebra and let $M$ be an $A$-module. 
Define the {\it canonical submodule lattice ${\mathcal L}(M) $ of $M$} to be the partially ordered set given by all canonically embedded submodules with embedding $\varphi$ denoted by $(N, \varphi)$ of $M$ with partial order given by  $(N, \varphi) \leq (N', \varphi')$ if $\varphi' \vert_N = \varphi$. 
\end{definition} 

 \begin{remark}
 Given  $(N, \varphi)$ and $(N', \varphi')$ in $\call (M)$
 \begin{itemize}
 \item the join $(N, \varphi) \vee (N', \varphi')$ is the smallest $(L, \theta)$ such that  $N, N'$ are submodules of $L$ and $\theta \vert_N  = \varphi$ and $\theta \vert_{N'} =\varphi'$, 
 \item the meet  $(N, \varphi) \wedge (N', \varphi')$ is the largest $(L, \theta)$ such that $L$ is a submodule of $N$ and $N'$ and $\varphi \vert_L = \theta$  and $\varphi' \vert_L = \theta. $
 \end{itemize}
 \end{remark}

 \begin{remark} \label{rm::lattice squares} 
Let $H(M)$ be the Hasse diagram of $\call (M)$. 
 \begin{enumerate} 
 \item  Let  $(N, \varphi) \leq (N', \varphi')$ be such that they give an edge in $H(M)$. Then this edge is labelled by the module $N' / N$ which is simple.
 \item  If in $H (M)$ there are edges $i, j$ such that 
$\scalebox{.5}{\xymatrix{  & \bullet && \bullet &  \\ 
&&\ar@{-}[ul]^{i}  \bullet \ar@{-}[ur]_j && \\  }}$  or 
$\scalebox{.5}{\xymatrix{  && \bullet && \\
& \bullet  \ar@{-}[ur]^j && \bullet \ar@{-}[ul]_i  &  }}$ 
then there is a mesh 
$\scalebox{.5}{\xymatrix{  && \bullet && \\
& \bullet \ar@{-}[ur]^j && \bullet \ar@{-}[ul]_i  &  \\ 
&&\ar@{-}[ul]^i \bullet \ar@{-}[ur]_j && \\ } }$ in $H (M)$.
\end{enumerate}
\end{remark}

In the following  we will not distinguish between a lattice and the associated Hasse diagram.

\subsection{Bijection between perfect matching lattice and canonical submodule lattice}

The next result,  Theorem~\ref{lattice correspondence}, 
gives an isomorphism of the perfect matching lattice of a snake graph and the canonical submodule lattice of the corresponding string module. 
This has also been noted in \cite[Remark 5.5]{MSW2}.  
Using  Proposition~\ref{Matching to submodule} and Proposition~\ref{Submodule to matching}, the 
 proof of  Theorem~\ref{lattice correspondence} is based  on the idea that the edge labelling of the Hasse diagram of the perfect matching lattice corresponds to    adding or removing a simple top of the  corresponding submodules in the extended submodule lattice.

\begin{theorem}\label{lattice correspondence}
Let $A = KQ/I$  and let  $M(w)$ be a string module over $A$ with string $w$ and  with associated snake graph $\calg $. Then the perfect matching lattice ${\mathcal L}(\calg) $  of $\calg$   is in bijection with the canonical submodule  lattice ${\mathcal L}(M) $.
\end{theorem}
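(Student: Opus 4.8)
The plan is to exhibit the bijection explicitly using the two maps already constructed in Proposition~\ref{Matching to submodule} and Proposition~\ref{Submodule to matching}, and then to verify that it is an isomorphism of lattices by checking compatibility with the Hasse diagram edge labellings. First I would define $\Phi : \call(\calg) \to \call(M)$ by $\Phi(P) = (M(P), \varphi_P)$, the canonical submodule associated to $P$ (Proposition~\ref{Matching to submodule}), and $\Psi : \call(M) \to \call(\calg)$ by $\Psi(N,\varphi) = P_\varphi$ (Proposition~\ref{Submodule to matching}). The content of those two propositions is precisely that these maps are well defined and that $M(P_\varphi) = N$ and, conversely, that the submodule $M(P)$ recovers $P$ via $P \mapsto P_{\varphi_{M(P)}}$; so the bulk of the work for "$\Phi$ and $\Psi$ are mutually inverse bijections of sets" is already done, and I would just assemble it, being slightly careful about the non-uniqueness of the snake subgraph $\calh_i$ when $w$ contains repeated substrings — here one checks that $P_\varphi$ only depends on the embedding $\varphi$, not on the choice of $\calh_i$, because $P_{max}(\calh_i)$ and $\Pm|_{\calh_i}$ are determined by $\varphi$.

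Next I would show $\Phi$ is order preserving and order reflecting, which (since both sides are finite posets) upgrades the set bijection to a lattice isomorphism. The cleanest route is via the Hasse diagrams: I would show that $P \lessdot P'$ is a covering relation in $\call(\calg)$ — i.e. $P'$ is obtained from $P$ by rotating a single tile $T$ of face weight $v$, passing from the two "vertical/horizontal" edges of $T$ to the other two — if and only if $(M(P),\varphi_P) \lessdot (M(P'),\varphi_{P'})$ in $\call(M)$ with simple quotient $S_v$. Concretely, rotating a tile $T$ that can be rotated "upward" in the poset changes $P \ominus \Pm$ by adjoining (or deleting) exactly the tile $T$ to the enclosed region; by Proposition~\ref{Matching to submodule} the dimension vector changes by the basis vector at $v$, and by Remark~\ref{rm::lattice squares}(1) the only way an edge of $H(M)$ can change the dimension vector by a single simple is that $M(P') / M(P) \cong S_v$ is a simple top. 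One must check that the newly enclosed tile is always attached along the "top" of the existing submodule, i.e. that adding it corresponds to adding a simple top rather than a simple socle — this follows from the description of tops and socles in Remark~\ref{tops and socles} together with the construction of $\Pm$ in Section~\ref{min max sym diff} (a tile that can still be rotated from its $\Pm$-position is not a socle tile). Conversely, every simple-top extension of a canonical submodule $N \hookrightarrow M$ corresponds to enlarging the enclosed tile set by one tile, hence to a single tile rotation, so covering relations match up in both directions.

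I expect the main obstacle to be the bookkeeping around covering relations and the "adding a simple top" claim: one has to be careful that when a submodule $M(P)$ is a direct sum $N(w_1) \oplus \cdots \oplus N(w_n)$ of several string summands (several enclosed components $\calh_i$), a covering $M(P) \lessdot M(P')$ in $\call(M)$ may either enlarge one summand by a simple top, or merge two summands, or create a new simple summand, and I must check each of these corresponds to exactly one admissible tile rotation and conversely. Once the covering relations are matched, order is determined (in a finite graded poset the order is the transitive closure of covers), so $\Phi$ is a poset isomorphism; and a bijective poset isomorphism between lattices is automatically a lattice isomorphism, so joins and meets are preserved. I would conclude by noting that since $\call(\calg)$ is a distributive lattice \cite{MSW1}, so is $\call(M)$, which sets up the connection to the weak Bruhat order in the subsequent section.
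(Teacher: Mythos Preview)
Your proposal is correct and follows essentially the same route as the paper: use Propositions~\ref{Matching to submodule} and~\ref{Submodule to matching} for the set bijection, then show that covering relations (edges in the Hasse diagrams) correspond by arguing that a single tile rotation corresponds to adding or removing a simple in the top of the associated submodule. The paper's argument for the ``simple top'' claim is a short contradiction (if the removed vertex were not in the top, some resulting substring would fail to give a submodule), whereas you propose to extract it from Remark~\ref{tops and socles} and the construction of $\Pm$; both work, and your flagging of the multi-summand bookkeeping (enlarging, merging, or creating a summand) is a useful refinement that the paper leaves implicit.
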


\begin{proof}
By Proposition~\ref{Matching to submodule} and Proposition~\ref{Submodule to matching} the two lattices  are equal as sets. Let $H(\calg)$ and $H(M)$ be the Hasse diagrams of   ${\mathcal L}(\calg) $ and ${\mathcal L}(M) $, respectively.   We now show that there is an edge between two vertices  in  $H(\calg) $ if and only if there is an edge between the corresponding two vertices in  $H(M) $. 

Suppose that $P$ and $P'$ are two perfect matchings  of $\calg$ that are connected by an edge
 in $H(\calg)$  and let $w_P$ and $w_{P'}$ be the strings of $M(P)$ and $M(P')$ 
 respectively. Let $P_{min} \ominus P = \cup_{i} \calh_i$  and 
 $P_{min} \ominus P' = \cup_{i} \calh'_i$. Then $P$ and $P'$ determine canonical embeddings $\varphi :  \cup_{i} \calh_i \hookrightarrow \calg$ and $\varphi' :  \cup_{i} \calh'_i \hookrightarrow \calg$, respectively. The perfect matchings $P$ and $P'$ agree everywhere except on one tile $T$ of $\calg$ 
 where they have opposite matchings. This implies that at least one edge in exactly one 
 of them agrees with $P_{min}$ and the other one has at least one edge agreeing with 
 $P_{max}$. Therefore the vertex $v$  in $w$ corresponding to $T$ is either in $w_P$ 
 or $w_{P'}$ and $w_P$ and $w_{P'}$ agree everywhere except in the arrows or 
 inverses of arrows connected to $v$.  
 Without loss of generality suppose $v$ is in $w_P$. Suppose that the simple 
 corresponding to   $v$ is not in ${\rm top}(M(w_P))$.  Removing $v$ from $w_P$ gives 
  rise to $w_{P'}$  giving rise to at least one substring $m$  of $w_{P'}$ such that 
  $M(m)$ is not a submodule of $M$, leading to a contradiction. Therefore $v$ 
  corresponds to a simple in the top of $M(w_P)$ and removing $v$ corresponds to an 
  edge in $H(M)$ between $M(P)$ and $M(P')$. 
 Furthermore,   under our assumption, $\cup_{i} \calh_i \setminus \cup_{i} \calh'_i = T$, therefore $\varphi \vert_{  \cup_{i} \calh'_i} = \varphi'$. This implies that the induced module embeddings are such that $\varphi \vert_{M(P')} = \varphi'$.
  
Conversely, suppose that $ (N(w_i), \varphi_i)$ and $(N(w_j), \varphi_j)$  are two canonically embedded submodules of $M(w)$ connected by an edge in $H(M)$. Therefore, without loss of generality,  by definition of $H(M)$, there exists exactly one vertex $v$ contained in $w_i$ and not contained in $w_j$ and all other vertices of $w_i$ and $w_j$ are the same. Let $P_{\varphi_i}$ and $P_{\varphi_j}$ be the perfect matchings associated to $ (N(w_i), \varphi_i)$ and $(N(w_j), \varphi_j)$, respectively. Let $(P_{min} \ominus P_{\varphi_i})  \setminus (P_{min} \ominus P_{\varphi_j}) = T$ for some tile $T$ of $\calg$.  
Therefore $P_{min}\vert_T = P_{\varphi_j} \vert_T$ and $P_{\varphi_i} $ and $ P_{\varphi_j} $ agree on all tiles different from $T$. Therefore  by \cite{MSW2} there is an edge between $P_{\varphi_i} $ and $P_{\varphi_j} $ in $H(\calg)$.
\end{proof}

We now recall \cite[Theorem 13.1]{MSW1} in the context of our set-up using the  correspondence in Theorem~\ref{lattice correspondence}.

\begin{corollary} 
$A = KQ/I$ with $k$ simple modules, $M$ a  string $A$-module with snake graph $\calg$ and let $\underline{e}  \in   \mathbb N^k$.  Then the number of canonical submodules of $M$ with dimension vector $\underline{e}$ is given by 
\[ 
n(M, \underline{e}) =  \bigm|  \{ P \in \match (\calg) \mid h(P \ominus P^-) = \underline{e} \}  \bigm|.
\]
Therefore the Euler characteristic of the quiver Grassmanian  of $M$ with dimension vector $\underline{e}$ is given by 
\[
\chi (Gr_e (M) =  n(M, \underline{e}).
\]
\end{corollary}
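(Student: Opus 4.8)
The plan is to deduce both assertions directly from the dimension-vector refinement of the bijection established in Theorem~\ref{lattice correspondence}, together with the statement of \cite[Theorem 13.1]{MSW1}, since the corollary is precisely that theorem transported through our correspondence. First I would observe that the mutually inverse assignments $P \mapsto M(P)$ of Proposition~\ref{Matching to submodule} and $N \mapsto P_\varphi$ of Proposition~\ref{Submodule to matching} respect dimension vectors: by Proposition~\ref{Matching to submodule}, the canonical submodule $M(P)$ attached to a perfect matching $P$ satisfies $\underline{\dim}\, M(P) = h(P \ominus P^-)$, where $P^- = P_{min}$ is the minimal perfect matching. Hence, fixing $\underline{e} \in \mathbb{N}^k$ and restricting the bijection of Theorem~\ref{lattice correspondence} to the fibre over $\underline{e}$, one obtains a bijection between the set of canonical submodules of $M$ of dimension vector $\underline{e}$ and the set $\{\, P \in \match(\calg) \mid h(P \ominus P^-) = \underline{e} \,\}$. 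Counting both sides yields $n(M,\underline{e}) = |\{\, P \in \match(\calg) \mid h(P \ominus P^-) = \underline{e} \,\}|$, which is the first displayed formula.

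For the second statement I would invoke the known cellular structure of the quiver Grassmannian of a string module: $Gr_{\underline{e}}(M)$ decomposes into locally closed strata indexed by the canonically embedded submodules of $M$ of dimension vector $\underline{e}$, with each stratum an affine space (in the type $\mathbb{A}$ situation here the relevant string modules live over an orientation of a Dynkin diagram and are multiplicity-free, so the strata are in fact points). Consequently the Euler characteristic equals the number of strata, i.e. $\chi(Gr_{\underline{e}}(M)) = n(M,\underline{e})$. This is exactly the content of \cite[Theorem 13.1]{MSW1} after identifying, via Theorem~\ref{lattice correspondence}, the matchings $P$ with $h(P \ominus P^-) = \underline{e}$ with the canonical submodules of dimension vector $\underline{e}$; so at this point it suffices to quote that result rather than reprove it.

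The only genuine subtlety, and hence the step I expect to be the main obstacle, is the claim that \emph{every} submodule of $M$ contributing to the Euler characteristic is accounted for by a \emph{canonical} submodule, equivalently that each stratum of $Gr_{\underline{e}}(M)$ has Euler characteristic $1$. This is precisely what is imported from \cite[Theorem 13.1]{MSW1}; the remaining work in our setting is purely the bookkeeping of matching up dimension vectors under the already-established bijection of Theorem~\ref{lattice correspondence}, and (if desired) recording the consequence for $\chi(Gr_{\underline{e}}(M))$ that follows formally once the strata are known to be cells.
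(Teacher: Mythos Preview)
Your proposal is correct and matches the paper's approach exactly: the paper offers no separate proof of this corollary but simply presents it as the reformulation of \cite[Theorem~13.1]{MSW1} through the correspondence of Theorem~\ref{lattice correspondence}, which is precisely what you do. One small caution: your parenthetical that the strata ``are in fact points'' because the modules are multiplicity-free is not accurate in general (face weights of $\calg$ may repeat, as the paper notes), but since you explicitly defer to quoting \cite[Theorem~13.1]{MSW1} rather than reproving the cell decomposition, this does not affect the argument.
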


\subsection{Join irreducibles in the canonical submodules lattice}\label{Sec:JoinIrr}
By \cite[Theorem 5.2]{MSW2}, the perfect matching lattice of a snake graph is distributive. Therefore by Theorem~\ref{lattice correspondence} the canonical submodule of a string module is distributive and it is graded by the dimensions of the submodules.  

We briefly recall some facts about distributive lattices. 

\begin{definition}  
A lattice $\call$ is \emph{distributive} if for all $x,y,z \in \call$ we have $x \vee (y \wedge z) = (x \wedge y) \vee (x \wedge z)$ and  $x \wedge (y \vee z) = (x \vee y ) \wedge (x \vee z)$. 
\end{definition}

By Birkoff's Theorem for any finite distributive lattice $\call$ there exists a unique poset $P$ (up to isomorphism) such that $\call$ is given by the lattice of order ideals of $P$. Furthermore, $P$ is isomorphic to the poset induced by the set of 
join irreducible elements of $\call$. 

We recall that $I$ is an \emph{order ideal} of  $P$ if for any $x \in P$, if $y \leq x$ then $x \in  I$. 
An element $a$ in a lattice $\call$ is \emph{join irreducible} if $a = x \vee y$ implies $a =x  $ or $a =y$, for $x,y \in \call$. Furthermore, if $\call $ is a finite lattice then $a \in \call$ is join irreducible if and only if $a$ has exactly one lower cover. 

The following is a direct consequence of the definitions. 

\begin{proposition}
Let $M$ be an abstract string module with canonical submodule lattice $\call(M)$. The join irreducible elements in $\call(M)$ are the canonical submodules $N$ of $M$ such that ${\rm top}(M)$ is simple. 
\end{proposition}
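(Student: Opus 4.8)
The statement to prove is: for an abstract string module $M$ with canonical submodule lattice $\call(M)$, the join irreducible elements of $\call(M)$ are exactly the canonical submodules $N$ of $M$ such that ${\rm top}(N)$ is simple. (Here the ``${\rm top}(M)$'' in the statement should read ${\rm top}(N)$, the submodule's own top.) The plan is to use the characterisation recalled immediately before the proposition: in a finite lattice, $N$ is join irreducible if and only if $N$ has exactly one lower cover. So I would translate ``has exactly one lower cover in $\call(M)$'' into a statement about string combinatorics via Remark~\ref{rm::lattice squares}(1), which says that an edge $N' \leq N$ in the Hasse diagram $H(M)$ corresponds to the quotient $N/N'$ being simple, i.e.\ to removing a simple top summand of $N$.

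\textbf{Key steps.} First I would record that, by Remark~\ref{rm::lattice squares}(1) and Definition~\ref{def::canonical submodule lattice}, the lower covers of a canonical submodule $(N,\varphi)$ of $M$ are precisely the canonical submodules obtained from $N$ by deleting a vertex of the string of $N$ that corresponds to a simple in ${\rm top}(N)$ — equivalently, each lower cover is of the form $N'$ with $N/N' \cong S$ for $S$ a simple top summand of $N$, and distinct simple top summands give distinct lower covers. (One should check that each such deletion does yield a valid canonical submodule: removing a source vertex of the string of $N$ leaves a string, and the result still embeds canonically in $M$ via the restriction of $\varphi$; this is routine from the string description and the fact that a substring of a string is a string.) Second, given this correspondence, the number of lower covers of $N$ equals the number of simple summands of ${\rm top}(N)$, counted with multiplicity — more precisely the number of vertices of the string of $N$ lying in the top. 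Hence $N$ has exactly one lower cover if and only if ${\rm top}(N)$ is simple. Third, combining with the recalled fact that (in a finite lattice) join irreducible $\Leftrightarrow$ exactly one lower cover, we conclude that the join irreducibles of $\call(M)$ are exactly the canonical submodules $N$ with ${\rm top}(N)$ simple. As a degenerate case I would note the bottom element of $\call(M)$, the zero submodule: it has no lower cover, hence is not join irreducible, consistent with the convention that a simple top requires $N \neq 0$.

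\textbf{Main obstacle.} The only genuinely non-formal point is the claimed bijection between lower covers of $(N,\varphi)$ and simple summands of ${\rm top}(N)$: one must verify both that every such deletion produces a legitimate element of $\call(M)$ with the right covering relation, and that no two distinct simple top summands produce the same submodule (so that we are really counting correctly), and that these are the \emph{only} lower covers. All three follow from Remark~\ref{rm::lattice squares}(1) together with the explicit combinatorial description of canonical submodules of string modules in terms of substrings (as used in the proofs of Proposition~\ref{Matching to submodule} and Proposition~\ref{Submodule to matching}) — removing a vertex in the top of $N$ corresponds to passing to a substring, and the vertices in ${\rm top}(N)$ are exactly the sources of the underlying oriented type $\mathbb{A}$ diagram of $N$ — so the argument is short once this dictionary is invoked. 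I expect the write-up to be only a few lines, essentially assembling these observations.
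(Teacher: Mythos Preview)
Your proposal is correct and matches the paper's approach: the paper states this as ``a direct consequence of the definitions'' with no further proof, and what you have outlined is precisely the unpacking of that phrase---use the recalled characterisation that in a finite lattice join irreducible means exactly one lower cover, together with Remark~\ref{rm::lattice squares}(1) to identify lower covers of $(N,\varphi)$ with removals of a simple top summand of $N$. You are also right that the ``${\rm top}(M)$'' in the statement is a typo for ``${\rm top}(N)$''.
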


We now show how the poset structure of  the set of join irreducibles can be read-off directly from the  string. More precisely, let $M$ be an abstract string module with string $w = w_1 \ldots w_n$ where the $w_i$ are alternating direct and inverse strings. Suppose that $w_1 = a_1 \ldots a_j$ is a direct string with $a_i \in Q_1$, for $1 \leq i \leq j$. Then $M(t(w_1))$ and $M(a_i \ldots a_j)$, for $ 1 \leq i \leq j$  are join irreducible submodules of $M$ and $$M(t(w_1)) \subset M(a_j) \subset \cdots \subset M(a_2 \ldots a_j) \subset M(a_1 \ldots a_j) = M(w_1).$$ 
Now let $w_2 = b_1 \ldots b_h$ and $w_3 = c_1 \ldots c_k$ with $b_i^{-1}, c_i \in Q_1$. Then $M(w_2 w_3), M(b_1 \ldots b_i)$, for $1 \leq i \leq h-1$ and $M(c_i \ldots c_k)$, for $2 \leq i \leq k$ as well as $M(t(w_3))$ are join irreducible. Furthermore, $$M(t(w_1)) = M(s(w_2)) \subset M(b_1) \subset \cdots \subset M(b_1 \ldots b_{h-1}) \subset M(w_2 w_3)$$ and  $$M(t(w_3)) \subset M(c_k) \subset \cdots \subset M(c_3 \ldots c_k) \subset M(c_2 \ldots c_{k}) \subset  M(w_2 w_3).$$ 
And similarly for any substring $w_r w_{r+1}$ of $w$  with $r$ even. Now suppose that $w_n = d_1 \ldots d_m$ is an inverse string, that is $d_i^{-1} \in Q_1$. Then the $M(s(w_n)) = M(t(w_{n-1}), M(d_1 \ldots d_i)$, for $1 \leq i \leq m$ are join irreducible and 
$$M(s(w_n)) \subset M(d_1) \subset M(d_1 d_2)  \cdots \subset  M(d_1 \ldots d_m).$$ 
In case $w$ starts with an inverse (resp. ends with a direct) string, similar reasoning as above gives the following result.

\begin{corollary}\label{poset structure of join irreducibles}
Let $M$ be an abstract string module with string $w = w_1 \ldots w_n$ where the $w_i$ are alternating direct and inverse strings. Then the form of  $w$ gives the poset structure of the set of join irreducibles. 
\end{corollary}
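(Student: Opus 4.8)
The plan is to make the discussion preceding the statement into a genuine proof by extracting the combinatorial data that determines a finite poset and observing that the subword structure of $w = w_1 \dots w_n$ produces exactly this data. Concretely, I would first invoke Birkhoff's theorem together with the distributivity of $\call(M)$ (established via Theorem~\ref{lattice correspondence} and \cite[Theorem 5.2]{MSW2}): $\call(M)$ is recovered up to isomorphism from the poset $\mathcal{J}(M)$ of its join irreducibles, and by the Proposition immediately above these are precisely the canonical submodules $N$ of $M$ with $\mathrm{top}(N)$ simple. So it suffices to describe the partial order on $\mathcal{J}(M)$ purely in terms of $w$.

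Next I would carry out the enumeration of join irreducibles block by block along the decomposition $w = w_1 \dots w_n$ into maximal alternating direct/inverse strings, exactly as sketched in the paragraph before the corollary, but phrased so that it is clearly exhaustive. For a direct block $w_r = a_{1}\dots a_{\ell}$ (with all $a_i \in Q_1$), the join irreducibles "supported on" that block are the submodules $M(t(w_r))$ and $M(a_i \dots a_{\ell})$ for $1 \le i \le \ell$, forming a chain under inclusion with $M(t(w_r))$ at the bottom; for an inverse block one gets the mirror-image chain anchored at the source. At an even-indexed block $w_r$, which is flanked by the direct blocks $w_{r-1}$ and $w_{r+1}$, the uniserial submodules fuse: $M(w_{r-1}' w_r w_{r+1}')$-type submodules give two chains meeting at the module $M(s(w_r)) = M(t(w_{r-1}))$, and similarly at the two endpoints $T_1$, $T_{n+1}$ of the snake graph (cf. Remark~\ref{tops and socles}, which identifies exactly the simples occurring in $\mathrm{top}(M)$ and $\mathrm{soc}(M)$). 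Collecting these, $\mathcal{J}(M)$ is a disjoint union of chains, one per maximal monotone subword, glued at the vertices that correspond to simples in $\mathrm{soc}(M(w))$ — i.e. a "fence"/zigzag poset whose shape is read directly off the alternation pattern of $w$. I would state this precisely and note that two join irreducibles are comparable iff one string is a subword of the other in the ambient $w$, which is manifestly determined by the form of $w$.

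Finally I would assemble: the poset $\mathcal{J}(M)$ has just been described by a recipe whose only input is the sequence of block lengths and their directions, i.e. the form of $w$; and by Birkhoff $\call(M) \cong$ the lattice of order ideals of $\mathcal{J}(M)$; hence the form of $w$ determines $\call(M)$, which is the assertion of Corollary~\ref{poset structure of join irreducibles}. I would keep this last step to a sentence or two, since it is formal once the description of $\mathcal{J}(M)$ is in place.

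The main obstacle — really the only nontrivial point — is proving that the list of join irreducibles given block-by-block is complete and that the stated inclusions are the \emph{only} ones, i.e. that no "accidental" comparabilities arise between uniserial submodules living in different (non-adjacent) blocks. This is where the admissibility of $I$ and the string condition matter: a candidate submodule $M(m)$ with $m$ a subword of $w$ touching two separated monotone pieces would have a non-simple top (it would have tops at each local source of $m$), so it is not join irreducible; and comparability $M(m) \subseteq M(m')$ forces $m$ to be an honest connected subword of $m'$ within $w$. I would isolate this as a lemma ("a connected subword $m$ of $w$ gives a join irreducible submodule iff $m$ lies within a single maximal monotone block extended by at most its two flanking half-blocks, and inclusions among these are precisely subword inclusions"), prove it from Remark~\ref{tops and socles} and the characterization of join irreducibles as those with a unique lower cover, and then the corollary follows immediately.
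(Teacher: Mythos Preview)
Your proposal is correct and follows essentially the same route as the paper: the paper's ``proof'' of this corollary \emph{is} the block-by-block enumeration of uniserial submodules in the paragraph preceding it (for $w_1$ direct and $w_n$ inverse), together with the one-line remark that the remaining endpoint cases are handled by symmetric reasoning. You reproduce that enumeration and add two things the paper leaves implicit: the Birkhoff framing (already set up earlier in Section~\ref{Sec:JoinIrr}) and an explicit check that no further comparabilities occur between join irreducibles supported on non-adjacent blocks. The latter is a genuine point the paper glosses over, so isolating it as a lemma is a reasonable improvement; just note that it follows almost immediately once you use the characterisation ``join irreducible $\Leftrightarrow$ simple top'' and observe that comparable canonical submodules of $M(w)$ must have nested supporting substrings.

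Two small corrections. First, the chains you describe are glued not only at the socle simples but also at their tops: the chain ending at $M(w_1)$ and the two chains rising to $M(w_{2r}w_{2r+1})$ share that peak element, so the resulting poset is a fence with identifications at \emph{both} ends of each chain, not just at the minima. (This is exactly the poset of indecomposable projectives over the reoriented $A_t$ quiver, as in Remark~\ref{rem::joinIrre}(2).) Second, the corollary as stated only asserts that the form of $w$ determines the poset of join irreducibles; your final Birkhoff step recovering $\call(M)$ is true but already subsumed in the surrounding discussion, so you can drop it and end once $\mathcal{J}(M)$ is described.
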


\begin{remark}\label{rem::joinIrre}
(1) Instead of the direct proof above, Corollary~\ref{poset structure of join irreducibles} can also be deduced from Thereom~\ref{lattice correspondence} together with  \cite[Theorem 5.4]{MSW2}.

(2)  Another way of determining the poset structure of the join irreducibles is by relabelling the vertices of $w$ from $1$ to $t$ where $t$ is the number of vertices in $w$ and considering the string as an orientation of a quiver $Q$ of type $A_t$. Then the join irreducibles are given by relabelling  the vertices in the  projective indecomposable $KQ$-modules back to the initial label. We illustrate this in an example.
\end{remark}

\begin{example} Let $A$ be the path algebra of the quiver $ \xymatrix{
        1       & 2 \ar[l]   \ar[d] \\
        & \ar[ul]  3 }$. Consider the string module $M =  \begin{tikzpicture}[baseline=-0.85ex]
\node at (0,0){1};
\node at (.25,.25){2};
\node at (.5,0){3};
\node at (.75,-.25){1};
\node at (1,0){2};
\end{tikzpicture}$ over $A$. Following Remark~\ref{rem::joinIrre}~(2), relabel $M$ as $A_M=1\leftarrow 2 \rightarrow 3 \rightarrow 4 \leftarrow 5$. The poset of projectives in $A_5$ is given on the left of Figure~\ref{fig::joinIrre}. We make the identification $4\to 1$ and $5 \to 2$ to obtain the poset structure of join irreducibles of the canonical submodule lattice of $M$ given on the right of Figure~\ref{fig::joinIrre}.
\begin{figure}[H]
\[
\begin{tikzpicture}[baseline=-0.85ex]
\node at (0,0){$P_1$};
\node[rotate=45] at (.5,.5){$\subset$};
\node at (1,1){$P_2$};
\node[rotate=-45] at (1.5,.5){$\subset$};
\node at (2,0){$P_3$};
\node[rotate=-45] at (2.5,-.5){$\subset$};
\node at (3,-1){$P_4$};
\node[rotate=45] at (3.5,-.5){$\subset$};
\node at (4,0){$P_5$};
\node at (7,0){$1$};
\node[rotate=45] at (7.5,.5){$\subset$};
\node at (8,1){$\begin{tikzpicture}[baseline=-0.85ex]
\node at (0,0){1};
\node at (.25,.25){2};
\node at (.5,0){3};
\end{tikzpicture}$};
\node[rotate=-45] at (8.5,.5){$\subset$};
\node at (9,0){$\begin{tikzpicture}[baseline=-0.85ex]
\node at (.5,0){3};
\node at (.75,-.25){1};
\end{tikzpicture}$};
\node[rotate=-45] at (9.5,-.5){$\subset$};
\node at (10,-1){$1$};
\node[rotate=45] at (10.5,-.5){$\subset$};
\node at (11,0){$\begin{tikzpicture}[baseline=-0.85ex]
\node at (1,0){2};
\node at (.75,-.25){1};
\end{tikzpicture}$};
\end{tikzpicture}
\]
\label{fig::joinIrre}
\end{figure}
\end{example}

\subsection{Lattice bijections with intervals in the weak Bruhat order} \label{sec::weak Bruhat}

We now fix some notation. Let $\frak{S}_{n+1}$ be the symmetric group on $n+1$ letters. It is generated by the simple reflections $s_i = (i \; i + 1) $  for $i = 1, \ldots, n$. Given an abstract string module $M$ corresponding to a snake graph $\calg$ with $n+1$ tiles, number the tiles linearly 1 through $n+1$ and adapt the corresponding notation for $M$.  Let $(i_1, \ldots, i_n)$ be the edge labellings in a  maximal chain in $\call(\calg)$ (resp. in $\call(M)$). Then $(i_1, \ldots, i_n)$ gives rise to the  element $s_{i_1} \cdots s_{i_n}$ in $\frak{S}_{n+1}$. Since $i_j \neq i_k$ for $j \neq k$,   $s_{i_1} \cdots s_{i_n}  $ is a reduced  expression of a Coxeter element  in $\frak{S}_{n+1}$. Note that if in $\call (M)$ we have a square $\scalebox{.5}{\xymatrix{  && \bullet && \\
& \bullet \ar@{-}[ur]^j && \bullet \ar@{-}[ul]_i  &  \\ 
&&\ar@{-}[ul]^i \bullet \ar@{-}[ur]_j && \\ } }$, then $i, j$ correspond to simple modules in the top of  a submodule of $M$. Therefore, $|i - j| >1$ and this translates to the commutativity relation $s_i s_j = s_j s_i$ in $\frak{S}_{n+1}$. 

\begin{lemma}\label{lem::reduced expressions}
Let  $\sigma= s_{i_1} \cdots s_{i_n}$ and $\sigma'=s_{j_1} \cdots s_{j_n}$ be two reduced expressions in $\frak{S}_{n+1}$ corresponding to two distinct maximal chains in $\call (M)$ for some string module $M$. Then $\sigma = \sigma' $.
\end{lemma}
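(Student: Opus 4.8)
The plan is to show that any two reduced expressions arising from maximal chains in $\call(M)$ represent the same group element by exploiting the structure of $\call(M)$ as a \emph{distributive} lattice together with the combinatorial constraints on the edge labels. First I would recall the key facts already established: every maximal chain in $\call(M)$ has the same length $n$ (the lattice is graded by dimension), each edge is labelled by a face weight/vertex, and along any single maximal chain the labels $i_1,\dots,i_n$ are pairwise distinct (each vertex of $\calg$ appears exactly once as an edge label since passing it corresponds to adding the corresponding simple top exactly once). Thus each maximal chain gives a word in which each of the symbols $1,\dots,n$ occurs exactly once, so $s_{i_1}\cdots s_{i_n}$ is automatically a reduced expression for a Coxeter element.

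The heart of the argument is to pass between two maximal chains by a sequence of elementary moves and check each move preserves the group element. Since $\call(M)$ is a finite distributive (hence graded) lattice, any two maximal chains from $\hat 0$ to $\hat 1$ differ by a finite sequence of \emph{square moves}: replacing the two edges of one side of a diamond $\scalebox{.5}{\xymatrix{  && \bullet && \\ & \bullet \ar@{-}[ur]^j && \bullet \ar@{-}[ul]_i  &  \\  &&\ar@{-}[ul]^i \bullet \ar@{-}[ur]_j && \\ } }$ by the two edges of the other side. This is exactly the content of Remark~\ref{rm::lattice squares}(2), which guarantees that whenever a chain goes up via $j$ then $i$ we may instead go up via $i$ then $j$ through the opposite corner of the mesh. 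So it suffices to show: if such a square occurs, the local subwords $s_j s_i$ and $s_i s_j$ are equal in $\frak S_{n+1}$. But this was observed in the paragraph preceding the lemma: in such a square $i$ and $j$ are the labels of two simple modules lying in the top of one common submodule of $M$, forcing $|i-j|>1$, whence $s_i$ and $s_j$ commute. Therefore each square move leaves the group element unchanged, and since $\sigma$ and $\sigma'$ are connected by finitely many such moves, $\sigma=\sigma'$.

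In writing this up I would first justify the distinctness-of-labels claim and the gradedness (citing the distributivity from \cite{MSW2} and Theorem~\ref{lattice correspondence}), then invoke the standard fact that in a graded lattice any two maximal chains are connected by a sequence of square/jump moves supported on covering diamonds; here the diamonds available are precisely the meshes of Remark~\ref{rm::lattice squares}(2). Finally I would record the commutation $s_is_j=s_js_i$ for each such diamond and conclude by induction on the number of square moves needed.

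I expect the main obstacle to be the assertion that two maximal chains in $\call(M)$ are connected by moves through the \emph{specific} diamonds described in Remark~\ref{rm::lattice squares}(2), rather than through arbitrary rank-two intervals. In a general graded lattice a rank-two interval $[x,y]$ with two atoms can have more than two elements, but in a distributive lattice every such interval is a diamond (a Boolean lattice of rank $2$), so the square-move connectivity is clean; one must also check that every diamond that actually occurs has the shape in Remark~\ref{rm::lattice squares}(2) with distinct upper labels $i\neq j$, which follows because the two edges out of the bottom vertex $x$ add two \emph{different} simple tops (if they added the same simple, the two covers would coincide). Pinning down this step carefully — ideally by an explicit reference to standard results on jump/square moves in distributive lattices, or by a short direct argument using distributivity — is where the real content lies.
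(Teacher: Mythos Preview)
Your proposal is correct and follows essentially the same line as the paper: both arguments reduce the claim to the fact that any two maximal chains are connected by a sequence of diamond (mesh) swaps, and that each such swap commutes the corresponding simple reflections because $|i-j|>1$. The only difference is packaging—the paper runs an explicit induction on the first position where the two chains diverge and draws the resulting sublattice of meshes (appealing to Remark~\ref{rm::lattice squares}), whereas you invoke distributivity to obtain the square-move connectivity as a standard fact about finite distributive lattices; your version is slightly cleaner but the content is identical.
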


\begin{proof}
Let $k$ be the smallest integer such that $s_{i_k}  \neq s_{j_k}$. Then the edge labellings of the two maximal chains coincide in the first $k-1$ positions, that is $i_1 = j_1$, \ldots, $i_{k-1} = j_{k-1}$ and $i_k \neq j_k$.

Then in $\call (M)$ there is a corresponding sublattice which is of the form 

$$\scalebox{.8}{\xymatrix{  & \bullet && \bullet&  \\ 
 &&\ar@{-}[ul]_{i_k}  \bullet \ar@{-}[ur]^{j_k} && \\
 && \bullet \ar@{-}[u]_{i_{k-1}= j_{k-1}} \ar@{.}[d]&&\\
 && \\
}}$$

Since $\call (M)$ is finite, we have  a sublattice of the form

$$\scalebox{.8}{\xymatrix{ &&\ar@{-}[dl]_{i_{t}} \bullet \ar@{-}[dr]^{j_t}&& \\
 &\bullet \ar@{-}[dr]_{j_{t}}  \ar@{.}[ddddl] &&\ar@{-}[dl]^{i_{t}} \bullet \ar@{.}[ddddr]&\\
 &&\bullet&&\\
&&\bullet\ar@{.}[u]&&\\ 
  &  \bullet  \ar@{-}[ur]^{j_{k+1}}  &  & \bullet  \ar@{-}[ul]_{i_{k+1}} &    \\
\bullet \ar@{-}[ur]^{j_{k}}  &&\bullet  \ar@{-}[ul]^{i_{k+1}}  \ar@{-}[ur]_{j_{k+1}}  &&\bullet  \ar@{-}[ul]_{i_{k}} \\ 
 &\ar@{-}[ul]^{i_{k+1}}  \bullet \ar@{-}[ur]^{j_k} \ar@{-}[dr]_{i_k} && \bullet \ar@{-}[ul]_{i_k} \ar@{-}[dl]^{j_k}  \ar@{-}[ur]_{j_{k+1}} \\ 
 && \bullet && \\
 && \bullet \ar@{-}[u]_{i_{k-1}= j_{k-1}} \ar@{.}[d]&&\\
 &&&& \\
}}$$

where the lattice is adapted accordingly if $i_{k+1} = j_k$ and $j_{k+1} = i_k$,  and so on.

Together with Remark~\ref{rm::lattice squares} this implies that $s_{i_1} \cdots s_{i_t} =  s_{j_1} \cdots  s_{j_t} $ and we proceed inductively to obtain $\sigma = \sigma'$.  
\end{proof}

In view of Lemma~\ref{lem::reduced expressions}, any maximal chain in $\call (M)$ gives rise to the same Coxeter element. We call this element the \emph{associated Coxeter element} of $M$ and denote it by $\sigma_M$.

The following result shows that the lattices $\call(M)$ (and therefore also the lattice  $\call(\calg)$) gives the complete set of reduced expressions of $\sigma_M$. Namely, each maximal chain corresponds to a distinct reduced expression of $\sigma_M$. More precisely, we have the following. 

\begin{theorem}\label{thm::weak order}
Let $M$ be a string module over $A=KQ/I$  and let $\sigma_M$ be the associated Coxeter element as defined above. Then we have  a lattice bijection between 
 the lattice corresponding to the interval $[e,\sigma_M]$ in the weak Bruhat order of $\frak{S}_{n+1}$ and the canonical submodule lattice  $\call (M)$.
\end{theorem}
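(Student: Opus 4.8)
The plan is to construct the bijection directly on the level of Hasse diagrams and then verify it is a lattice isomorphism using the structural lemmas already established. Concretely, I would send a canonical submodule $(N,\varphi)\in\call(M)$ to the permutation $w_{(N,\varphi)}$ obtained as follows: pick any saturated chain from the zero submodule up to $(N,\varphi)$ in $\call(M)$, read off its edge labels $(i_1,\dots,i_m)$, and set $w_{(N,\varphi)}=s_{i_1}\cdots s_{i_m}$. By Remark~\ref{rm::lattice squares} together with the argument of Lemma~\ref{lem::reduced expressions} applied to the sublattice below $(N,\varphi)$, this product is independent of the chosen chain, so the map is well defined; moreover the word $(i_1,\dots,i_m)$ has no repeated letters (each letter is a tile that gets added exactly once as one travels up), so $w_{(N,\varphi)}$ is given by a reduced word, hence has length $m=\dim N$, and lies in $[e,\sigma_M]$ since a chain up to $(N,\varphi)$ extends to a maximal chain of $\call(M)$ whose label word spells $\sigma_M$.

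The second step is to check this map is a poset isomorphism onto $[e,\sigma_M]$. It is order preserving essentially by construction: if $(N,\varphi)\le(N',\varphi')$ then a chain through $(N,\varphi)$ extends to one through $(N',\varphi')$, which in the weak order says $w_{(N,\varphi)}\le_R w_{(N',\varphi')}$ (prefix of a reduced word). For surjectivity I would argue that every $\tau\le_R\sigma_M$ arises: write a reduced word for $\sigma_M$ compatible with a reduced word for $\tau$ as a prefix, realize that prefix as a saturated chain in $\call(M)$ using Remark~\ref{rm::lattice squares}~(2) to navigate the mesh relations whenever the chosen reduced word of $\tau$ differs from the one read off a given chain, and take the submodule at the top of that chain. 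Injectivity follows because $\dim N$ equals the length of $w_{(N,\varphi)}$ and two distinct submodules of the same dimension are separated already in $\call(M)$ — but cleaner is to build the inverse map explicitly: given $\tau\le_R\sigma_M$, any reduced word of $\tau$ traces out a saturated chain in $\call(M)$ from $0$ (by Remark~\ref{rm::lattice squares}, an initial segment of edge-labels of $\call(M)$ with distinct labels is always realizable, and different reduced words of $\tau$ land at the same vertex by the same mesh-chasing as in Lemma~\ref{lem::reduced expressions}), and one reads off the terminal submodule; these two maps are mutually inverse by inspection.

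The final step is upgrading the poset isomorphism to a lattice isomorphism. Since both $\call(M)$ and the interval $[e,\sigma_M]$ in the weak order are finite distributive lattices (the former by Theorem~\ref{lattice correspondence} and \cite[Theorem 5.2]{MSW2}, the latter because intervals $[e,c]$ below a Coxeter element $c$ in the weak Bruhat order of $\frak S_{n+1}$ are distributive), a bijection that preserves and reflects the covering relation — equivalently the order — automatically preserves meets and joins. So it suffices to have matched the Hasse diagrams, which the previous step does; I would phrase this as: a bijection between two finite lattices that is an isomorphism of posets is a lattice isomorphism, and our map together with its inverse is such. Combined with Theorem~\ref{lattice correspondence}, this also yields the bijection with $\call(\calg)$, proving the main theorem.

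I expect the main obstacle to be the well-definedness and surjectivity bookkeeping: one must be sure that \emph{every} initial segment of a reduced word of $\sigma_M$ with distinct letters is actually realized by a saturated chain in $\call(M)$ — this is where the commutation squares of Remark~\ref{rm::lattice squares}~(2) do the real work, translating a commutation move $s_is_j=s_js_i$ in a reduced word into a mesh square in $H(M)$ — and that two reduced words of the same $\tau$ always terminate at the same submodule. Both reduce to iterating the single-square analysis already carried out in the proof of Lemma~\ref{lem::reduced expressions}, but the inductive formulation needs a little care; once that is in place, the distributivity of both sides makes the promotion from a poset iso to a lattice iso immediate.
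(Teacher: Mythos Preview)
Your approach is essentially the paper's, only unpacked: both reduce the bijection to the statement that maximal chains in $\call(M)$ are exactly the reduced expressions of $\sigma_M$, and both hinge on matching commutation moves $s_is_j=s_js_i$ with mesh squares in $H(M)$. The paper packages this tersely by showing the set $R(M)$ of chain-derived reduced expressions is closed under single commutation moves and leaves the element-level bijection implicit; you spell out the map $(N,\varphi)\mapsto w_{(N,\varphi)}$ and its inverse, and observe that a poset isomorphism of finite lattices is a lattice isomorphism, which is a useful addition.

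There is one place where your sketch is not self-contained. You attribute the implication ``commutation move $\Rightarrow$ mesh square'' to Remark~\ref{rm::lattice squares}(2) and to the analysis in Lemma~\ref{lem::reduced expressions}, but both of those only give the direction \emph{fork in $H(M)$ $\Rightarrow$ mesh}. For surjectivity you start from a chain segment $N\xrightarrow{j_t}N'\xrightarrow{j_{t+1}}N''$ together with the group-theoretic fact $s_{j_t}s_{j_{t+1}}=s_{j_{t+1}}s_{j_t}$, and you need to \emph{produce} the fork at $N$ labelled $j_{t+1}$ before Remark~\ref{rm::lattice squares}(2) applies. That step is precisely the representation-theoretic content of the paper's proof: if no such fork exists, then $S_{j_{t+1}}$ lies strictly above $S_{j_t}$ in the radical filtration of $N''$ and the two simples correspond to \emph{adjacent} vertices of the string; by the linear $1,\dots,n{+}1$ labelling this forces $|j_t-j_{t+1}|=1$, contradicting commutativity in $\frak S_{n+1}$. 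This is the one genuine input beyond the lattice axioms, and it is not contained in Lemma~\ref{lem::reduced expressions} or Remark~\ref{rm::lattice squares} alone. Once you insert this argument at the spot you already flag as ``the main obstacle'', your proof is complete and coincides with the paper's.
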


\begin{proof}
By Lemma~\ref{lem::reduced expressions} it is enough to show that every reduced expression of $\sigma_M$ arises as a maximal chain of inclusions of canonical submodules of $M$ in $\call(M)$. Let $R(M)$ be the set  of reduced expressions of $\sigma_M$ given   by the maximal chains in $\call(M)$.  Suppose that $ s_{i_1} \cdots s_{i_n} $ is a reduced expression of $\sigma_M$ such that $ s_{i_1} \cdots s_{i_n} \notin R(M)$ and let $s_{j_1} \cdots s_{j_n} $ be a reduced expression of $\sigma_M$  in $R(M)$. Then there is a sequence of reduced expressions of $\sigma_M$  of the form $r_0 = s_{j_1} \cdots s_{j_n}, r_1, \ldots, r_k =s_{i_1} \cdots s_{i_n}$ where $ r_{l+1}$ is obtained from $r_l$ by a single commutativity relation of the  form $s_{j_{t}} s_{j_{t+1}} = s_{j_{t+1}} s_{j_{t}}$ for  $ 1 \leq t \leq n-1$.

Without loss of generality we can assume that $r_1 \notin R(M)$  and that $ r_1 = s_{j_1} \cdots  s_{j_{t+1}} s_{j_t} \cdots s_{j_n}$. This implies that in $\call(M)$ there is no mesh with edges labelled by $j_t$ and $j_{t+1}$ and that locally $\call(M)$ has the form $$\scalebox{.8}{\xymatrix{ & N^{''} \ar@{-}[dl]_{j_{t+1}} \\
 N^{'} \ar@{-}[dr]_{j_{t}} &\\
 & N
 }}$$ where $N, N^{'}, N^{''}$ are submodules of $M$. That is $j_t$ corresponds to a simple $S_{j_t}$ in the top of $N'$ but it is not in the top of $N''$ and $j_{t+1}$ corresponds to a simple $S_{j_{t+1}}$ that is not in the top of $N'$ but it is in the top of $N''$. This implies that the simples $S_{j_t}$ and $S_{j_{t+1}}$ are in consecutive radical layers in $M$ and since there is no mesh with edges $j_t$ and $j_{t+1}$, we have that  $S_{j_t}$ and $S_{j_{t+1}}$ correspond to consecutive vertices in the string of $M$. By our labelling of the string of $M$, this implies that $j_{t+1} = j_t +1$. But this contradicts $s_{j_{t}} s_{j_{t+1}} = s_{j_{t+1}} s_{j_{t}}$ and therefore $r_1 \in R(M)$. This completes the proof. 
\end{proof}

In the following we summarise the lattice bijections we have obtained.

\begin{theorem}\label{thm::LatticeBijections}
Let $M$ be a string module over $A=KQ/I$ with corresponding snake graph $\calg$ and let $\sigma_M$ be the associated Coxeter element as defined above. Then we have  lattice bijections between 
\begin{enumerate}
\item the lattice corresponding to the interval $[e,\sigma_M]$ in the weak Bruhat order of $\frak{S}_{n+1}$,
\item  the canonical submodule lattice  $\call (M)$, 
\item the lattice of perfect matchings $\call (\calg)$.
\end{enumerate}
\end{theorem}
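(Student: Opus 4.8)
The plan is to observe that Theorem~\ref{thm::LatticeBijections} is essentially a repackaging of the two results already established, so the proof amounts to transitivity of the bijections. First I would invoke Theorem~\ref{lattice correspondence}, which gives a lattice isomorphism between $\call(\calg)$ and $\call(M)$; this handles the bijection between items (2) and (3). Then I would invoke Theorem~\ref{thm::weak order}, which gives a lattice bijection between the interval $[e,\sigma_M]$ in the weak Bruhat order of $\frak{S}_{n+1}$ and the canonical submodule lattice $\call(M)$; this handles the bijection between items (1) and (2). Composing the two yields the bijection between (1) and (3), and the proof is complete.

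The only point requiring a sentence of justification is that $\sigma_M$ is well defined, i.e.\ that it does not depend on the choice of maximal chain used to read off a reduced expression. This is exactly the content of Lemma~\ref{lem::reduced expressions}, which I would cite explicitly when recalling the definition of the associated Coxeter element. I would also note in passing that since, by \cite[Theorem 5.2]{MSW2} and Theorem~\ref{lattice correspondence}, all three posets are distributive lattices and the bijections of Theorem~\ref{lattice correspondence} and Theorem~\ref{thm::weak order} are lattice maps (they preserve the covering relations, hence the Hasse diagrams, hence meets and joins), the composite bijection is a lattice isomorphism, not merely a bijection of underlying sets.

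There is no real obstacle here: the theorem is a corollary of the preceding two theorems, and the work has already been done. If anything, the one thing to be careful about is bookkeeping with the edge labellings — one should make sure that the labelling of the Hasse diagram of $\call(\calg)$ by face weights, the labelling of the Hasse diagram of $\call(M)$ by simple composition factors (Remark~\ref{rm::lattice squares}(1)), and the labelling by simple reflections $s_i$ all correspond under the identification of tiles with the numbers $1,\ldots,n+1$; but this compatibility is built into the statements of Theorem~\ref{lattice correspondence} and Theorem~\ref{thm::weak order} and their proofs, so nothing new is needed. Hence the proof is simply:

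\begin{proof}
The bijection between (2) and (3) is Theorem~\ref{lattice correspondence}, and the bijection between (1) and (2) is Theorem~\ref{thm::weak order}, where the associated Coxeter element $\sigma_M$ is well defined by Lemma~\ref{lem::reduced expressions}. Composing these two bijections gives the bijection between (1) and (3). Finally, all three posets are distributive lattices, by \cite[Theorem 5.2]{MSW2} together with Theorem~\ref{lattice correspondence}, and the bijections above preserve the covering relations of the Hasse diagrams, hence are lattice isomorphisms. This completes the proof.
\end{proof}
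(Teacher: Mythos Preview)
Your proposal is correct and matches the paper's treatment exactly: the paper presents this theorem as a summary of the preceding results (introduced with ``In the following we summarise the lattice bijections we have obtained'') and gives no separate proof, since it follows immediately from Theorem~\ref{lattice correspondence} and Theorem~\ref{thm::weak order}. Your additional remarks on well-definedness via Lemma~\ref{lem::reduced expressions} and on distributivity are accurate but more than the paper itself spells out.
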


\section{Bijections induced by skein relations}\label{Sec::NewBijection}

In \cite{CS1} crossing snake graphs  and their resolutions are defined and a bijection of the perfect matching lattices of a pair of crossing snake graphs and the perfect matching lattices of their resolutions are given. In this Section we will show that the presentation of this bijection can be given in a much simplified form. For this we begin by interpreting the notions of crossing snake graphs and their resolutions in terms of abstract string combinatorics.

\begin{definition}
(1) We say two \emph{abstract strings} $w_1$ and $w_2$ cross in  a string $m$  if $w_1 =  u_1 a m b v_1$ and $w_2 = u_2 c m d v_2$ where $u_i, u_i, m \in \calb$,  $a, d$ are arrows and $b, c$ are inverse arrows.

\begin{figure}[H]
\[
\resizebox{.8\textwidth}{!}{
\begin{tikzpicture}
\node at (0,0){$\begin{tikzpicture}[node distance=1cm and 1.5cm]
\coordinate[label=left:{}] (1);
\coordinate[right=1cm of 1] (13);
\coordinate[right=1cm of 13] (14);
\coordinate[below right=.8cm of 14] (15);
\coordinate[right=1cm of 15] (16);
\coordinate[above right=.8cm of 16] (17);
\coordinate[right=1cm of 17] (18);

\coordinate[right=-.5cm of 1,label=right:{$w_1=$}] (1');

\draw[thick,decorate,decoration={snake,amplitude=.4mm,segment length=2mm}] 
(13)-- node [anchor=north,scale=.9]{$u_1$} (14) 
(17)-- node [anchor=north,scale=.9]{$v_1$} (18);

\draw[thick] 
(15)-- node [anchor=north,scale=.9]{$m$} (16);

\draw[thick,->] (14)--node [anchor=south west,scale=.9]{$a$}(15);
\draw[thick,->] (17)--node [anchor=south east,scale=.9]{$b$}(16);
\end{tikzpicture}$};

\node at (7,0){$\begin{tikzpicture}[node distance=1cm and 1.5cm]
\coordinate[label=left:{}] (1);
\coordinate[right=1cm of 1] (2);
\coordinate[above right=.8cm of 2,label=right:{}] (3);
\coordinate[right=1cm of 3] (4);
\coordinate[below right=.8cm of 4] (5);
\coordinate[right=1cm of 5] (6);

\coordinate[right=-1.5cm of 1,label=right:{$w_2=$}] (1');

\draw[thick,decorate,decoration={snake,amplitude=.4mm,segment length=2mm}] 
(1)-- node [anchor=north,scale=.9]{$u_2$} (2) 
(5)-- node [anchor=north,scale=.9]{$v_2$} (6);

\draw[thick] 
(3)-- node [anchor=south,scale=.9]{$m$} (4);

\draw[thick,->] (3)--node [anchor=south east,scale=.9]{$c$}(2); 
\draw[thick,->] (4)--node [anchor=south west,scale=.9]{$d$}(5);
\end{tikzpicture}$};
\end{tikzpicture}
}
\]
\end{figure}

The \emph{resolution} of the  crossing of $w_1$ and $w_2$ in $m$ are the following abstract strings 
\begin{itemize}
\item[-] $w_3 = u_1 a m d v_2$
\item[-] $w_4 = u_2 c m b v_1$
\item[-] $w_5 = \left\{ \begin{array}{ll}
u_1 e  u_2^{-1} & \mbox{ if } u_1, u_2 \neq \emptyset \mbox{ and where } e = \leftarrow \\
 u'_2  & \mbox{ if } u_1 =  \emptyset \mbox{ and } u_2 \neq \emptyset  \\
 u'_1  & \mbox{ if } u_2 = \emptyset \mbox{ and } u_1 \neq \emptyset  \\
 \end{array} \right. $\\
where  $u'_2$ is such that $u_2 = u'_2 r''$ where $r'' $ is a maximal sequence of direct arrows and $u'_1$ is such that $u_1 = u'_1 r'$ where $r' $ is a maximal sequence of inverse arrows. 
 \item[-] $w_6  = \left\{ \begin{array}{ll}
v_1^{-1} f  v_2 & \mbox{ if } v_1, v_2 \neq \emptyset \mbox{ and where } f = \leftarrow \\
 v'_2  & \mbox{ if } v_1 =  \emptyset \mbox{ and } v_2 \neq \emptyset  \\
 v'_1  & \mbox{ if } v_2 = \emptyset \mbox{ and } v_1 \neq \emptyset  \\
 \end{array} \right. $ \\
where $v'_1$ is such that $v_2 = s' v'_2$ where $s' $ is a maximal sequence of inverse arrows and $v'_2$ is such that $v_2 = v'_2 r''$ where $r'' $ is a maximal sequence of direct arrows. 
\end{itemize} 
The resolution of a crossing of $w_1$ and $w_2$ is  illustrated in Figure~(\ref{w3-w6}).

\begin{figure}
\[ \resizebox{0.7\textwidth}{!}{
\begin{tikzpicture}
\node at (0,0){$\begin{tikzpicture}[node distance=1cm and 1.5cm]
\coordinate[label=left:{}] (1);
\coordinate[above right=1cm of 1] (7);
\coordinate[right=1cm of 7] (8);
\coordinate[below right=.8cm of 8] (9);
\coordinate[right=1cm of 9] (10);
\coordinate[below right=.8cm of 10] (11);
\coordinate[right=1cm of 11] (12);

\coordinate[right=-.5cm of 1,label=right:{$w_3=$}] (1');

\draw[thick,decorate,decoration={snake,amplitude=.4mm,segment length=2mm}] 
(7)-- node [anchor=south,scale=.9]{$u_1$} (8) 
(11)-- node [anchor=north,scale=.9]{$w_2$} (12) ;

\draw[thick] 
(9)-- node [anchor=south,scale=.9]{$m$} (10);

\draw[thick,->] (8)--node [anchor=south west,scale=.9]{$a$}(9);
\draw[thick,->] (10)--node [anchor=south west,scale=.9]{$d$}(11);
\end{tikzpicture}$};

\node at (7,0){$\begin{tikzpicture}[node distance=1cm and 1.5cm]
\coordinate[label=left:{}] (1);
\coordinate[below right=1cm of 1] (7');
\coordinate[right=1cm of 7'] (8');
\coordinate[above right=.8cm of 8'] (9');
\coordinate[right=1cm of 9'] (10');
\coordinate[above right=.8cm of 10'] (11');
\coordinate[right=1cm of 11'] (12');

\coordinate[right=-.5cm of 1,label=right:{$w_4=$}] (1');

\draw[thick,decorate,decoration={snake,amplitude=.4mm,segment length=2mm}] 
(7')-- node [anchor=north,scale=.9]{$u_2$} (8') 
(11')-- node [anchor=north,scale=.9]{$v_1$} (12') 
;

\draw[thick] 
(9')-- node [anchor=north,scale=.9]{$m$} (10')
;

\draw[thick,<-] (8')--node [anchor=north west,scale=.9]{$c$}(9');
\draw[thick,<-] (10')--node [anchor=north west,scale=.9]{$b$}(11');
\end{tikzpicture}
$};

\node at (-.5,-3){$\begin{tikzpicture}[node distance=1cm and 1.5cm]
\coordinate[label=left:{}] (1);
\coordinate[below right=1cm of 1] (7');
\coordinate[right=1cm of 7'] (8');
\coordinate[above right=.8cm of 8'] (9');
\coordinate[right=1cm of 9'] (10');
\coordinate[above right=.8cm of 10'] (11');
\coordinate[right=1cm of 11'] (12');

\coordinate[right=-.5cm of 1,label=right:{$w_5=$}] (1');

\draw[thick,decorate,decoration={snake,amplitude=.4mm,segment length=2mm}] 
(7')-- node [anchor=north,scale=.9]{$u_1$} (8') 
(9')-- node [anchor=south,scale=.9]{$u_2^{-1}$} (10');

\draw[thick,<-] (8')--node [anchor=north west,scale=.9]{$e$}(9');
\end{tikzpicture}
$};

\node at (6.5,-3){$\begin{tikzpicture}[node distance=1cm and 1.5cm]
\coordinate[label=left:{}] (1);
\coordinate[below right=1cm of 1] (7');
\coordinate[right=1cm of 7'] (8');
\coordinate[above right=.8cm of 8'] (9');
\coordinate[right=1cm of 9'] (10');
\coordinate[above right=.8cm of 10'] (11');
\coordinate[right=1cm of 11'] (12');

\coordinate[right=-.5cm of 1,label=right:{$w_6=$}] (1');

\draw[thick,decorate,decoration={snake,amplitude=.4mm,segment length=2mm}] 
(7')-- node [anchor=north,scale=.9]{$v_1^{-1}$} (8') 
(9')-- node [anchor=south,scale=.9]{$v_2$} (10');

\draw[thick,<-] (8')--node [anchor=north west,scale=.9]{$f$}(9');
\end{tikzpicture}
$};
\end{tikzpicture}
}\]
\caption{The strings $w_3, w_4, w_5$ and $w_6$.}\label{w3-w6}
\end{figure}
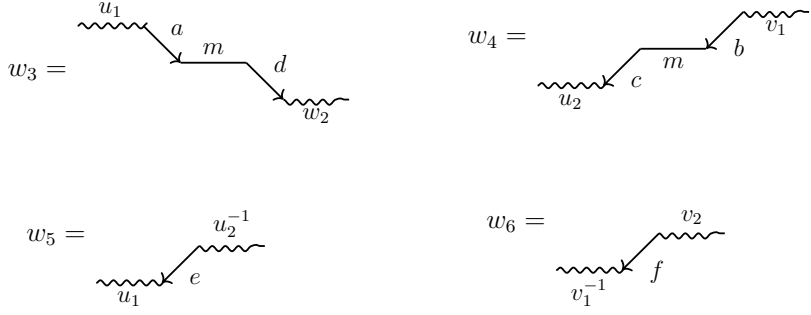

(2) In accordance with the terminology in \cite{CS1}, for two strings $w_1 = u_1  a  v_1$ and $w_2$, a \emph{grafting} of $w_2$ onto $w_1$ at $a$ is given by the string $u_1ew_2$ where $e = a^{-1}$ if $a \in \{ \to,\leftarrow\}$ and $e \in\{ \to,\leftarrow\}$ if $a=\emptyset$.

The \emph{resolution} of a grafting by a fixed $e\in\{\to,\leftarrow\}$ is given by the strings
\begin{itemize}
\item[-] $w_3 = u_1  e w_2$  
\item[-] $w_4 = v'_1$ where $v_1 = r v'_1$ and where is $r  = a_1 \ldots a_n$ is a maximal prefix of $v_2$  such that $a_i = a$  for $i=1,\dots,n$
\item[-] $w_5 = u'_1$ where $u_1 = u'_1 r'$ where $r'   = a'_1 \ldots a'_m$ is a maximal suffix of $v_1$  such that $a'_i = a$  for $i=1,\dots,m$
\item[-] $w_6  = \left\{ \begin{array}{ll} 
 v_1^{-1} a w_2  & \mbox{ if } a \neq  \emptyset \\ 
w_2' & \mbox{ if } a=  \emptyset \\
\end{array} \right.$, where $w_2 = r'' w'_2$ where $r''   = a''_1 \ldots a''_k$ is a maximal prefix of $w_2$  such that $a''_i = a$  for $i=1,\dots,k$.
\end{itemize}

\end{definition}

\begin{figure}
\[
\resizebox{1\textwidth}{!}{
\begin{tikzpicture}

\node[scale=.5] at (0,0){$\begin{tikzpicture}[node distance=1cm and 1.5cm]
\coordinate[label=left:{}] (1);
\coordinate[right=1cm of 1] (2);
\coordinate[below right=.8cm of 2] (3);
\coordinate[right=1cm of 3] (4);

\coordinate[right=-1.5cm of 1,label=right:{$w_1=$}] (1'');

\draw[thick,decorate,decoration={snake,amplitude=.4mm,segment length=2mm}] 
(1)-- node [anchor=north,scale=.9]{$u_1$} (2) 
(3)-- node [anchor=north,scale=.9]{$v_1$} (4);

\draw[thick,->] (2)--node [anchor=south west,scale=.9]{$a$}(3);

\coordinate[right=2.5cm of 1,label=right:{$=$}] (1'');

\coordinate[above right=3cm of 2] (5);
\coordinate[right=.8cm of 5] (5');
\coordinate[below right=.6cm of 5'] (6);
\coordinate[below right=.6cm of 6] (7);
\coordinate[below right=.6cm of 7] (8);
\coordinate[below right=.6cm of 8] (9);
\coordinate[below right=.6cm of 9] (10);
\coordinate[below right=.6cm of 10] (11);
\coordinate[below right=.6cm of 11] (12);
\coordinate[right=.8cm of 12] (13);

\draw[thick,decorate,decoration={snake,amplitude=.4mm,segment length=2mm}] 
(5)-- node [anchor=south,scale=.9]{$u'_1$} (5') 
(12)-- node [anchor=north,scale=.9]{$v'_1$} (13);

\draw[->] (5')--node [anchor=south west,scale=.9]{$a'_1$}(6);
\draw[dotted] (6)--node [anchor=south west,scale=.9]{$a'_1$}(7);
\draw[->] (7)--node [anchor=south west,scale=.9]{$a'_m$}(8);
\draw[thick,->] (8)--node [anchor=south west,scale=.9]{$a$}(9);
\draw[->] (9)--node [anchor=south west,scale=.9]{$a_1$}(10);
\draw[dotted] (10)--node [anchor=south west,scale=.9]{$a'_1$}(11);
\draw[->] (11)--node [anchor=south west,scale=.9]{$a_n$}(12);


\coordinate[below=3cm of 1] (1');
\coordinate[right=1cm of 1'] (2');

\coordinate[right=-1.5cm of 1',label=right:{$w_2=$}] (1''');
\coordinate[right=.7cm of 2',label=left:{$=$}] (3');
\coordinate[above right=.6cm of 3'] (4');
\coordinate[below right=.6cm of 4'] (5');
\coordinate[below right=.6cm of 5'] (6');
\coordinate[below right=.6cm of 6'] (7');
\coordinate[right=.8cm of 7'] (8');

\draw[thick,decorate,decoration={snake,amplitude=.4mm,segment length=2mm}] 
(1')-- node [anchor=north,scale=.7]{} (2') ;
\draw[thick,decorate,decoration={snake,amplitude=.3mm,segment length=2mm}] 
(7')-- node [anchor=north,scale=.7]{$w'_2$} (8') ;

\draw[->] (4')--node [anchor=south west,scale=.7]{$a''_1$}(5');
\draw[dotted] (5')--node [anchor=south west,scale=.7]{}(6');
\draw[->] (6')--node [anchor=south west,scale=.7]{$a''_k$}(7');
\end{tikzpicture}$};

\node[scale=.5] at (6,0){$\begin{tikzpicture}[node distance=1cm and 1.5cm]
\coordinate[label=left:{}] (1);
\coordinate[right=1cm of 1] (2);
\coordinate[above right=.8cm of 2] (3);
\coordinate[right=1cm of 3] (4);

\coordinate[right=-1.5cm of 1,label=right:{$w_1=$}] (1'');

\draw[thick,decorate,decoration={snake,amplitude=.4mm,segment length=2mm}] 
(1)-- node [anchor=north,scale=.9]{$u_1$} (2) 
(3)-- node [anchor=south,scale=.9]{$v_1$} (4);

\draw[thick,->] (3)--node [anchor=north west,scale=.9]{$a$}(2);

\coordinate[right=2.5cm of 1,label=right:{$=$}] (1'');

\coordinate[below right=2cm of 3] (5);
\coordinate[right=.8cm of 5] (5');
\coordinate[above right=.6cm of 5'] (6);
\coordinate[above right=.6cm of 6] (7);
\coordinate[above right=.6cm of 7] (8);
\coordinate[above right=.6cm of 8] (9);
\coordinate[above right=.6cm of 9] (10);
\coordinate[above right=.6cm of 10] (11);
\coordinate[above right=.6cm of 11] (12);
\coordinate[right=.8cm of 12] (13);

\draw[thick,decorate,decoration={snake,amplitude=.4mm,segment length=2mm}] 
(5)-- node [anchor=north,scale=.9]{$u'_1$} (5') 
(12)-- node [anchor=south,scale=.9]{$v'_1$} (13);

\draw[<-] (5')--node [anchor=north west,scale=.9]{$a'_1$}(6);
\draw[dotted] (6)--node [anchor=north west,scale=.9]{$a'_1$}(7);
\draw[<-] (7)--node [anchor=north west,scale=.9]{$a'_m$}(8);
\draw[thick,<-] (8)--node [anchor=north west,scale=.9]{$a$}(9);
\draw[<-] (9)--node [anchor=north west,scale=.9]{$a_1$}(10);
\draw[dotted] (10)--node [anchor=north west,scale=.9]{$a'_1$}(11);
\draw[<-] (11)--node [anchor=north west,scale=.9]{$a_n$}(12);

\coordinate[below=4cm of 1] (1');
\coordinate[right=1cm of 1'] (2');

\coordinate[right=-1.5cm of 1',label=right:{$w_2=$}] (1''');

\coordinate[right=.7cm of 2',label=left:{$=$}] (3');
\coordinate[below right=.6cm of 3'] (4');
\coordinate[above right=.6cm of 4'] (5');
\coordinate[above right=.6cm of 5'] (6');
\coordinate[above right=.6cm of 6'] (7');
\coordinate[right=.8cm of 7'] (8');

\draw[thick,decorate,decoration={snake,amplitude=.4mm,segment length=2mm}] 
(1')-- node [anchor=north,scale=.7]{} (2') ;
\draw[thick,decorate,decoration={snake,amplitude=.3mm,segment length=2mm}] 
(7')-- node [anchor=south,scale=.7]{$w'_2$} (8') ;

\draw[<-] (4')--node [anchor=south east,scale=.7]{$a''_1$}(5');
\draw[dotted] (5')--node [anchor=south east,scale=.7]{}(6');
\draw[<-] (6')--node [anchor=south east,scale=.7]{$a''_k$}(7');

\end{tikzpicture}$};

\draw[dotted] (3,2)--(3,-6);


\node[scale=.5] at (0,-4){$\begin{tikzpicture}[node distance=1cm and 1.5cm]
\coordinate[label=left:{}] (1);
\coordinate[right=1cm of 1] (2);
\coordinate[above right=.8cm of 2] (3);
\coordinate[right=1cm of 3] (4);

\coordinate[right=-1.5cm of 1,label=right:{$w_3=$}] (1'');

\draw[thick,decorate,decoration={snake,amplitude=.4mm,segment length=2mm}] 
(1)-- node [anchor=north,scale=.9]{$u_1$} (2) 
(3)-- node [anchor=south,scale=.9]{$w_2$} (4);

\draw[thick,->] (3)--node [anchor=north west,scale=.9]{$e$}(2);

\coordinate[below=1.5cm of 1] (1');
\coordinate[right=1cm of 1'] (2');
\coordinate[right=-1.5cm of 1',label=right:{$w_4=$}] (1'');
\draw[thick,decorate,decoration={snake,amplitude=.4mm,segment length=2mm}] 
(1')-- node [anchor=north,scale=.9]{$v'_1$} (2') ;

\coordinate[below=3cm of 1] (1'');
\coordinate[right=1cm of 1''] (2'');
\coordinate[right=-1.5cm of 1'',label=right:{$w_5=$}] (1''');
\draw[thick,decorate,decoration={snake,amplitude=.4mm,segment length=2mm}] 
(1'')-- node [anchor=north,scale=.9]{$u'_1$} (2'') ;

\coordinate[below=4.5cm of 1] (16);
\coordinate[right=1cm of 16] (26);
\coordinate[below right=.8cm of 26] (36);
\coordinate[right=.8cm of 36] (46);

\coordinate[right=-1.5cm of 16,label=right:{$w_6=$}] (16');
\draw[thick,decorate,decoration={snake,amplitude=.4mm,segment length=2mm}] 
(16)-- node [anchor=south,scale=.9]{$v^{-1}_1$} (26) ;

\draw[->] (26)--node [anchor=north east,scale=.7]{$c$}(36);

\draw[thick,decorate,decoration={snake,amplitude=.4mm,segment length=2mm}] 
(36)-- node [anchor=north,scale=.9]{$w_2$} (46) ;

\coordinate[right=.8cm of 46, label=above: OR] (56);
\coordinate[right=2cm of 56, label=above:{$w_6=w'_2$}] (66);

\end{tikzpicture}$};


\node[scale=.5] at (6,-4){$\begin{tikzpicture}[node distance=1cm and 1.5cm]
\coordinate[label=left:{}] (1);
\coordinate[right=1cm of 1] (2);
\coordinate[below right=.8cm of 2] (3);
\coordinate[right=1cm of 3] (4);

\coordinate[right=-1.5cm of 1,label=right:{$w_3=$}] (1'');

\draw[thick,decorate,decoration={snake,amplitude=.4mm,segment length=2mm}] 
(1)-- node [anchor=south,scale=.9]{$u_1$} (2) 
(3)-- node [anchor=north,scale=.9]{$w_2$} (4);

\draw[thick,->] (2)--node [anchor=south west,scale=.9]{$e$}(3);

\coordinate[below=1.5cm of 1] (1');
\coordinate[right=1cm of 1'] (2');
\coordinate[right=-1.5cm of 1',label=right:{$w_4=$}] (1'');
\draw[thick,decorate,decoration={snake,amplitude=.4mm,segment length=2mm}] 
(1')-- node [anchor=north,scale=.9]{$v'_1$} (2') ;

\coordinate[below=3cm of 1] (1'');
\coordinate[right=1cm of 1''] (2'');
\coordinate[right=-1.5cm of 1'',label=right:{$w_5=$}] (1''');
\draw[thick,decorate,decoration={snake,amplitude=.4mm,segment length=2mm}] 
(1'')-- node [anchor=north,scale=.9]{$u'_1$} (2'') ;

\coordinate[below=5.5cm of 1] (16);
\coordinate[right=1cm of 16] (26);
\coordinate[above right=.8cm of 26] (36);
\coordinate[right=.8cm of 36] (46);

\coordinate[right=-1.5cm of 16,label=right:{$w_6=$}] (16');
\draw[thick,decorate,decoration={snake,amplitude=.4mm,segment length=2mm}] 
(16)-- node [anchor=north,scale=.9]{$v^{-1}_1$} (26) ;

\draw[<-] (26)--node [anchor=south east,scale=.7]{$c$}(36);

\draw[thick,decorate,decoration={snake,amplitude=.4mm,segment length=2mm}] 
(36)-- node [anchor=south,scale=.9]{$w_2$} (46) ;

\coordinate[right=.8cm of 46, label=below: OR] (56);
\coordinate[right=2cm of 56, label=below:{$w_6=w'_2$}] (66);

\end{tikzpicture}$};

\end{tikzpicture}
}
\]
\caption{Illustration of a grafting of $w_2$ onto $w_1$ at $a$ } \label{Fig:CrossString}
\end{figure}
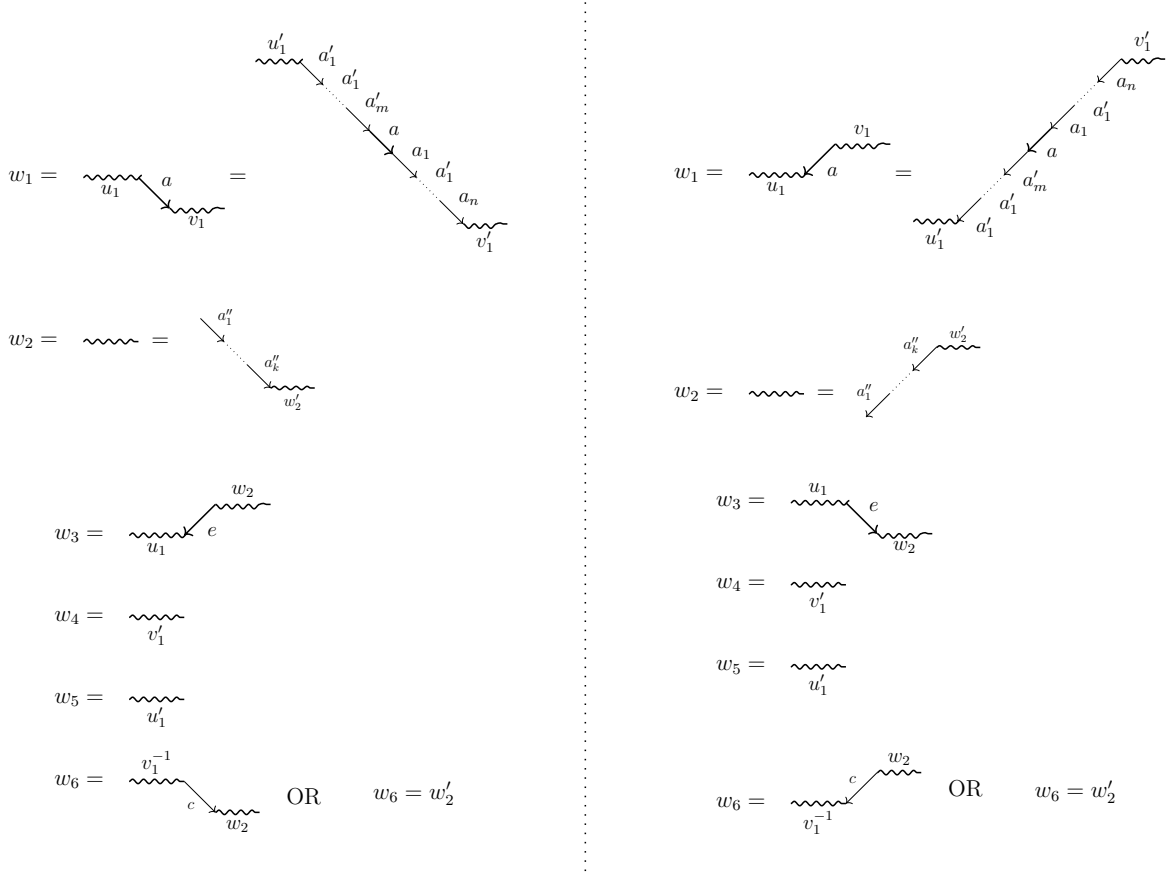

\begin{definition} We say that snake graphs $\calg$ and $\calg'$ \emph{cross} if their associated abstract strings cross and their \emph{resolution} is given by the snake graphs $\calg_3, \calg_4, \calg_5$ and $ \calg_6$ associated to the resolution of the crossing strings.
\end{definition}

Let $\calg_1, \ldots, \calg_6$ be the snake graphs associated to $w_1, \ldots, w_6$, respectively.

\begin{example}
	Let $w=1\rightarrow 2\rightarrow 3\rightarrow 4 \leftarrow 5\rightarrow 6$ and $w_2= 7 \leftarrow 3 \rightarrow 4 \rightarrow 8.$ Then $w_1$ crosses $w_2$ in $m=3 \rightarrow 4$ and its resolution is given by $w_3=1\rightarrow 2 \rightarrow 3 \rightarrow 4 \rightarrow 8$, $w_4=7 \leftarrow 3 \rightarrow 4 \leftarrow 5\rightarrow 6$, $w_5=1\rightarrow 2 \leftarrow 7$ and $w_6= 6 \leftarrow 5 \leftarrow 8$.
The corresponding snake graphs of this resolution are as follows:

\begin{figure}[!htbp]
\begin{tikzpicture}

\node at (0,0){$\begin{tikzpicture}[scale=.4]
\draw (0,0)--(2,0)--(2,2)--(3,2)--(3,1)--(0,1)--(0,0) (1,0)--(1,2)--(2,2)
(3,1)--(5,1)--(5,2)--(3,2) 
(4,1)--(4,2);
\node[scale=.8] at (.5,.5){$1$};
\node[scale=.8] at (1.5,.5){$2$};
\node[scale=.8] at (1.5,1.5){$3$};
\node[scale=.8] at (2.5,1.5){$4$};
\node[scale=.8] at (3.5,1.5){$5$};
\node[scale=.8] at (4.5,1.5){$6$};
\end{tikzpicture}$};

\node at (2.5,0){$\begin{tikzpicture}[scale=.4]
\draw (0,1)--(3,1)--(3,3)--(2,3)--(2,1)
(0,1)--(0,2)--(3,2)
(1,1)--(1,2)
;
\node[scale=.8] at (.5,1.5){$7$};
\node[scale=.8] at (1.5,1.5){$3$};
\node[scale=.8] at (2.5,1.5){$4$};
\node[scale=.8] at (2.5,2.5){$8$};
\end{tikzpicture}
$};

\node at (5,0){$\begin{tikzpicture}[scale=.4]
\draw (0,0)--(0,1)--(1,1)
(0,0)--(2,0)--(2,1)
(3,2)--(1,2)--(1,0)
(1,1)--(3,1)--(3,3)--(2,3)--(2,1)
;
\node[scale=.8] at (.5,.5){$1$};
\node[scale=.8] at (1.5,.5){$2$};
\node[scale=.8] at (1.5,1.5){$3$};
\node[scale=.8] at (2.5,1.5){$4$};
\node[scale=.8] at (2.5,2.5){$8$};
\end{tikzpicture}
$};

\node at (7.5,0){$\begin{tikzpicture}[scale=.4]
\draw (0,0)--(5,0)--(5,1)--(0,1)--(0,0)
(1,0)--(1,1) (2,0)--(2,1) (3,0)--(3,1) (4,0)--(4,1)
;
\node[scale=.8] at (.5,.5){$7$};
\node[scale=.8] at (1.5,.5){$3$};
\node[scale=.8] at (2.5,.5){$4$};
\node[scale=.8] at (3.5,.5){$5$};
\node[scale=.8] at (4.5,.5){$6$};
\end{tikzpicture}
$};

\node at (10,0){$\begin{tikzpicture}[scale=.4]
\draw (1,1)--(3,1)--(3,3)--(2,3)--(2,1)
(1,1)--(1,2)--(3,2)
;
\node[scale=.8] at (1.5,1.5){$1$};
\node[scale=.8] at (2.5,1.5){$2$};
\node[scale=.8] at (2.5,2.5){$7$};
\end{tikzpicture}$};

\node at (12,0){$\begin{tikzpicture}[scale=.4]
\draw (1,1)--(3,1)--(3,3)--(2,3)--(2,1)
(1,1)--(1,2)--(3,2)
;
\node[scale=.8] at (1.5,1.5){$6$};
\node[scale=.8] at (2.5,1.5){$5$};
\node[scale=.8] at (2.5,2.5){$8$};
\end{tikzpicture}
$};

\node at (0,-1){$\calg_1$};
\node at (2.5,-1){$\calg_2$};
\node at (5,-1){$\calg_3$};
\node at (7.5,-1){$\calg_4$};
\node at (10,-1){$\calg_5$};
\node at (12,-1){$\calg_6$};
\end{tikzpicture}
\end{figure}

\end{example}

We now reformulate Theorem~3.1 in \cite{CS1} in a concise single statement by collecting the effect of Figures~{6, 7, 8, 9, 10, and 11 in \cite{CS1}} in the bijection into two simple conditions given below.  Let $\calg_1$ and $\calg_2$  be snake graphs crossing in a snake graph $\calg$  such that $\calg = \calg_1 [s,t] = \calg_2 [s', t']$, and let $P_1$ and $P_2$ be the perfect matchings of $\calg_1$ and $\calg_2$, respectively. Denote by $T_i ,B_i, R_i, L_i$ the top, bottom, right and left edges of a tile $G_i$ of $\calg$, respectively. We define condition

$(*)$ to be the existence of $i \in [s,t]$ such that 
\[ 
P_1 [i] \cup P_2 [i] \supset \{B_i, L_i \}  \mbox{ or } P_1 [i] \cap P_2 [i] \subset \{B_i, L_i \} 
\]
 and 
$(**)$ to be the existence of $ s \leq i < t$ or $i = s-1$ or $i = s'-1$ such that 
\[ 
P_1 [i] \cup P_2 [i] \supset \{T_i, R_i \}
\]
where the notation $P_j[i]$ denotes the matched edges in $P_j$ of the  $i$-th tile in $\calg_1$.

We obtain the following reformulation of Theorem~3.1 in \cite{CS1}.

\begin{theorem}\label{thm::CS bijection}
Let $\calg_1, \calg_2$ be two snake graphs, and  $\calg_3 \ldots, \calg_6$ be the snake graphs obtained by resolving either an overlap or grafting of the associated strings. 
The following is a bijective map 
\[
\Phi : \match \calg_1 \times \match \calg_2 \longrightarrow \match \calg_3 \times \match \calg_4 \sqcup  \match \calg_5 \times \match \calg_6
\]
which in the case of  a grafting  is given by 
\[ 
\Phi(P_1, P_2) = \left\{ \begin{array}{ll}
(P_1 \vert_{\calg_3} \cup P_2 \vert_{\calg_3}, \, P_1 \vert_{\calg_4} \cup P_2 \vert_{\calg_4}) & \mbox{ if } e \in P_1 \cup P_2 \\ 
& \\
(P_1 \vert_{\calg_5} \cup P_2 \vert_{\calg_5}, \, P_1 \vert_{\calg_6} \cup P_2 \vert_{\calg_6}) & \mbox{ if } e \notin P_1 \cup P_2 \\ 
\end{array} \right. 
\]
and in the case of $\calg_1$ and $\calg_2$ crossing in $\calg$ such that $\calg = \calg_1 [s,t] = \calg_2 [s', t']$ it is given by 
\[
\Phi(P_1, P_2) = \left\{ \begin{array}{ll}
(P_3, P_4) & \mbox{ if } (*) \mbox{ or } (**) \mbox{ holds } \\
& \\
(P_1 \mid_{\calg_5} \cup P_2 \mid_{\calg_5}, \, P_1 \mid_{\calg_6}  \cup P_2 \mid_{\calg_6}) & \mbox{ otherwise }
\end{array} \right. 
\]
where if $i$ is the minimal integer satisfying $(*)$ or $(**)$ then 
\[
(P_3, P_4)  = \left\{ \begin{array}{ll}
(P_1 [1, i-1 [ \; \cup \; P_2 [i,d], \, P_2 [1, i-1] \; \cup \; P_1 [i, d'] ) & \mbox{ if $(*)$ holds} \\
& \\
 (P_1 [1, i+1 [ \; \cup \; P_2 ]i+1,d], \, P_2 [1, i+1[ \; \cup \; P_1 ]i+1, d'] ) & \mbox{ if $(**)$ holds} \\
\end{array} \right. 
\]
 where $P[i,j[$ (resp. $P]i,j]$) is the restriction of the perfect matching $P$ on the tiles $T_i,\dots,T_j$ of $\calg$ excluding the top and right (resp. bottom and left) edges of $T_j$ (resp. $T_i$).
\end{theorem}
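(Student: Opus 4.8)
The plan is to show that the map $\Phi$ written above is \emph{the same} bijection as the one constructed in \cite[Theorem~3.1]{CS1}, only now in closed form; once the two maps are identified, bijectivity comes for free. I would organise the verification by treating the grafting case and the genuine-crossing case separately, and in each case carry out three steps: (i) check that the displayed formula produces honest pairs of perfect matchings; (ii) match it, subcase by subcase, with the step-by-step assignment of \cite{CS1} (their Figures~6--11); and (iii) record the inverse explicitly. Throughout I would use the translation between crossing/grafting of snake graphs and of abstract strings set up just before the statement, the explicit shape of the minimal and maximal perfect matchings, and the symmetric-difference dictionary of Section~\ref{sec::pm of a snake graph and submodules}.

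For the grafting case I would first identify $e$ as the unique edge of $\calg_3$ (equivalently of $\calg_6$) joining the tile of face weight $t(u_1)$ to the first tile of the $w_2$-part; in $\calg_1$ it is the edge sitting at the arrow $a$. For any $(P_1,P_2)$ exactly one of $e\in P_1\cup P_2$, $e\notin P_1\cup P_2$ holds, so the two branches are exhaustive and mutually exclusive. Then I would check that when $e\in P_1\cup P_2$ the edge sets $P_1|_{\calg_3}\cup P_2|_{\calg_3}$ and $P_1|_{\calg_4}\cup P_2|_{\calg_4}$ cover each vertex of $\calg_3$, resp.\ $\calg_4$, exactly once --- a short local computation on Figure~\ref{Fig:CrossString}, using that $\calg_4$ and $\calg_5$ are the two pieces into which $\calg_1$ falls when the straight run of tiles around the grafting edge is excised, so the pieces glue consistently along the cut --- and symmetrically in the other branch. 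Comparing with Figures~6--11 of \cite{CS1} shows these are exactly their assignments, and the inverse re-inserts the excised straight run carrying its unique forced matching.

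For the genuine-crossing case, with $\calg=\calg_1[s,t]=\calg_2[s',t']$ the common overlap, the key I would isolate is a dichotomy: either $P_1$ and $P_2$ ``synchronise'' somewhere inside $\calg$ --- which is precisely what conditions $(*)$ and $(**)$ detect --- and then one may cut $\calg_1,\calg_2$ there and swap tails to land in $\match\calg_3\times\match\calg_4$; or they never synchronise, which forces the overlap to be matched on both sides by the unique alternating zigzag pattern and lands one in $\match\calg_5\times\match\calg_6$. I would make this precise by showing: (a) $(*)$ at a tile $G_i$ says $P_1[i]$ and $P_2[i]$ agree at $\{B_i,L_i\}$, so cutting there and re-gluing yields exactly the edge sets displayed for $(*)$, and taking $i$ minimal makes this well defined; (b) $(**)$ is the complementary synchronisation at the top-right of a tile, with the boundary indices $i=s-1$ and $i=s'-1$ covering a sync occurring on the tile immediately before the overlap inside $\calg_1$ or inside $\calg_2$, the half-open intervals $P_1[1,i+1[\,\cup\,P_2]i+1,d]$ etc.\ encoding that the cut must then be made one tile further along; (c) if neither $(*)$ nor $(**)$ holds, no tile of $\calg$ is matched in a ``cuttable'' way, so $P_1|_\calg$ and $P_2|_\calg$ are forced to be the zigzag matching and $P_1|_{\calg_5}\cup P_2|_{\calg_5}$, $P_1|_{\calg_6}\cup P_2|_{\calg_6}$ are perfect matchings of $\calg_5$, $\calg_6$; and (d) in each subcase the resulting pair is the one produced by the inductive procedure of \cite{CS1}.

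Finally, bijectivity: I would either quote it from \cite[Theorem~3.1]{CS1} once the identification of the two maps is complete, or prove it directly by writing down the inverse --- on the $\calg_3\times\calg_4$ summand, locate the minimal tile of $\calg$ on which $P_3,P_4$ synchronise and undo the cut-and-swap; on the $\calg_5\times\calg_6$ summand, re-expand the overlap $\calg$ carrying its unique forced matching on each side --- noting that the two summands have disjoint images because the $(*)/(**)$-versus-otherwise alternative (and, for grafting, membership of $e$ in $P_1\cup P_2$) is an intrinsic property of $(P_1,P_2)$. I expect the main obstacle to be step (c)/(d) of the crossing case: verifying that the \emph{single} pair of local conditions $(*),(**)$ together with the ``minimal $i$'' rule genuinely collapses the six separate figures of \cite{CS1} into one, and in particular reconciling the half-open intervals $P[1,i-1[$, $P[i,d]$ versus $P[1,i+1[$, $P]i+1,d]$ and the boundary indices $s-1$, $s'-1$ with the precise tiles at which the step-by-step process of \emph{loc.\ cit.} terminates. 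Once that bookkeeping is done, the theorem follows.
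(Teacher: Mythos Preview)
Your proposal is correct and follows essentially the same approach as the paper: the paper's own proof is the single sentence ``The proof is a straightforward case by case comparison with the cases detailed in the proof of Theorems~3.2 in \cite{CS1},'' and your plan is precisely an elaboration of that comparison, organised into the grafting and crossing subcases and matched against Figures~6--11 of \cite{CS1}. Your outline is considerably more detailed than what the paper records, but the strategy---identify $\Phi$ with the map of \cite{CS1} by running through the local configurations---is the same.
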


The proof is a straightforward case by case comparison with the cases detailed in the proof of Theorems 3.2 in \cite{CS1}.

\begin{remark}
 Theorem~\ref{thm::CS bijection} can be extended in the same way to include the bijections associated to self-crossing snake graphs and band graphs as given in \cite[Theorem 4.4]{CS2} and \cite[Theorem 4.8]{CS3}, respectively. 
\end{remark}

Using the symmetric difference of the minimal perfect matching with the perfect matchings in Theorem~\ref{thm::CS bijection}, we obtain the following Corollary.
 
\begin{corollary} Let $\calg_1, \ldots, \calg_6$ be the snake graphs in Theorem~\ref{thm::CS bijection} and let $M_i = M(\calg_i)$, for $i \in \{ 1, \ldots, 6\}$ be the associated string modules. Then  
 we obtain a bijection on the submodule lattices 

$$\call (M_1) \times \call (M_2)  \stackrel{\Phi}{\simeq} \call(M_3)  \times \call(M_4) \sqcup \call(M_5)  \times \call(M_6).  $$
\end{corollary}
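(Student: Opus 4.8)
The plan is to obtain the corollary by conjugating the bijection $\Phi$ of Theorem~\ref{thm::CS bijection} through the lattice isomorphisms supplied by Theorem~\ref{lattice correspondence}. First I would fix, for each $i\in\{1,\dots,6\}$, the bijection
\[
\beta_i\colon \match\calg_i \longrightarrow \call(M_i),\qquad P\longmapsto M(P),
\]
of Theorem~\ref{lattice correspondence}, which sends a perfect matching $P$ of $\calg_i$ to the canonical submodule $M(P)$ of $M_i=M(\calg_i)$. By Proposition~\ref{Matching to submodule}, $\beta_i(P)$ is the submodule whose canonical embedding has image the union of the snake subgraphs enclosed by $P\ominus P_{min}^{(i)}$, where $P_{min}^{(i)}$ is the minimal perfect matching of $\calg_i$; by Proposition~\ref{Submodule to matching} the inverse $\beta_i^{-1}$ sends a submodule $N$ with canonical embedding $\varphi$ to the perfect matching $P_\varphi$. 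Forming products gives bijections $\beta_1\times\beta_2\colon \match\calg_1\times\match\calg_2\to\call(M_1)\times\call(M_2)$ and $(\beta_3\times\beta_4)\sqcup(\beta_5\times\beta_6)$ between the corresponding disjoint unions.

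Then I would set
\[
\Phi \coloneqq \bigl((\beta_3\times\beta_4)\sqcup(\beta_5\times\beta_6)\bigr)\circ \Phi \circ (\beta_1\times\beta_2)^{-1},
\]
abusing notation as in the statement. As a composite of four bijections this is a bijection
\[
\call(M_1)\times\call(M_2)\;\longrightarrow\;\call(M_3)\times\call(M_4)\ \sqcup\ \call(M_5)\times\call(M_6),
\]
which is exactly the assertion. Since each $\beta_i$ is moreover an isomorphism of edge-labelled Hasse diagrams by Theorem~\ref{lattice correspondence}, the transported map inherits whatever compatibility with the lattice structure $\Phi$ enjoys on each of the three pieces.

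To present this bijection as concretely as $\Phi$ itself, I would finally unwind the formulas of Theorem~\ref{thm::CS bijection}: given a pair of submodules $(N_1,N_2)$, pass to $P_j=\beta_j^{-1}(N_j)$, apply the restriction-and-gluing rule defining $\Phi$, and take the symmetric difference with the minimal matching of the resulting snake graph; this should identify the operation on submodules with grafting or resolving the corresponding substrings. The one point needing genuine care is the bookkeeping of minimal matchings: the graphs $\calg_1,\dots,\calg_6$ carry a priori unrelated minimal matchings $P_{min}^{(1)},\dots,P_{min}^{(6)}$, and one must check that these are compatible under the restriction maps $P\mapsto P\vert_{\calg_j}$ occurring in $\Phi$ --- for instance that $P_{min}^{(1)}\vert_{\calg_3}\cup P_{min}^{(2)}\vert_{\calg_3}=P_{min}^{(3)}$ in the grafting case, together with the analogous identities for $\calg_4,\calg_5,\calg_6$ and for the overlap case --- so that symmetric differences, and hence the dimension vectors of the resulting submodules, transform in the expected way. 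This is a tile-by-tile verification combining the description of $P_{min}$ in Section~\ref{min max sym diff} with the local configurations in the definitions of crossing and grafting, and it is the only step requiring more than formal manipulation; the existence of the bijection is immediate once Theorems~\ref{lattice correspondence} and \ref{thm::CS bijection} are in hand.
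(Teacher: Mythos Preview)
Your approach is correct and essentially identical to the paper's: the paper's entire argument is the single sentence ``Using the symmetric difference of the minimal perfect matching with the perfect matchings in Theorem~\ref{thm::CS bijection}, we obtain the following Corollary,'' which is precisely your conjugation of $\Phi$ by the bijections $\beta_i$ of Theorem~\ref{lattice correspondence}. Your final paragraph about the compatibility of the various $P_{min}^{(i)}$ under restriction is an extra level of care the paper does not spell out, and in fact it is not needed for the bare bijection of sets asserted in the corollary (only for an explicit description of the induced map on submodules), so you may safely omit it.
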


Combining the results of the previous  sections with the results in  \cite{S} (see also \cite{CaSc} and \cite{CPS}),  we obtain the following. 

\begin{proposition}\label{prop::extension}
With the notation above,  for submodules $N_1$ and $N_2$ of $M_1$ and $M_2$ respectively, the map $\Phi$ gives rise to extensions of $N_1$ by $N_2$. Namely  if $\Phi (N_1, N_2) = (N_3, N_4) \subset \call(M_3) \times \call(M_4) $  then there is an extension 
$$ 0 \to N_2 \to N_3 \oplus N_4 \to N_1 \to 0.$$
\end{proposition}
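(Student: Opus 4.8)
The plan is to reduce the statement to the known short exact sequence for resolutions of crossing strings established in \cite{S} (and its follow-ups \cite{CaSc, CPS}), translating everything through the dictionary between string modules and snake graphs built in the earlier sections. First I would recall that via Theorem~\ref{lattice correspondence} a submodule $N_i$ of $M_i = M(\calg_i)$ corresponds to a perfect matching $P_i$ of $\calg_i$, namely $N_i = M(P_i)$ with $h(P_i \ominus P_{min})$ the dimension vector of $N_i$. Applying the bijection $\Phi$ of Theorem~\ref{thm::CS bijection} to the pair $(P_1, P_2)$ produces a pair of matchings lying in one of the two components: either $(P_3, P_4) \in \match\calg_3 \times \match\calg_4$, giving submodules $N_3 = M(P_3) \subseteq M_3$ and $N_4 = M(P_4) \subseteq M_4$, or a pair in $\match\calg_5 \times \match\calg_6$. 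In the first case I must exhibit the stated sequence $0 \to N_2 \to N_3 \oplus N_4 \to N_1 \to 0$.

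The key step is to identify $N_3 \oplus N_4$, at the level of the underlying string combinatorics, as the middle term of the canonical short exact sequence attached to a crossing (or grafting) resolution. Concretely, from the explicit description of $\Phi$ in Theorem~\ref{thm::CS bijection}, in the case $(*)$ the matching $P_3$ is glued from a prefix of $P_1$ and a suffix of $P_2$, and $P_4$ is glued from a prefix of $P_2$ and a suffix of $P_1$, the gluing happening at the minimal tile $i$ realising the local configuration; similarly in case $(**)$, and analogously in the grafting case with the letter $e$. I would unwind $P_j \ominus P_{min}$ for each $j \in \{1,2,3,4\}$ using Proposition~\ref{Matching to submodule}: the enclosed-tile subgraphs of $P_1 \ominus P_{min}$ and $P_2 \ominus P_{min}$ get redistributed among $\calg_3$ and $\calg_4$ exactly according to the cut at tile $i$, so that the corresponding strings of $N_3$ and $N_4$ are obtained by cutting the strings of $N_1$ and $N_2$ at the crossing overlap $m$ (respectively the grafting vertex). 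This is precisely the combinatorial input of the Crawley–Boevey / string-module extension description: given such a matched pair of cuts there is a short exact sequence with outer terms the two ``recombined'' strings and middle term the direct sum of the two ``original'' strings, or the reverse. Matching the orientation conventions in Figure~\ref{w3-w6} and in the definition of the resolution, the outer terms are $N_1$ and $N_2$ and the middle term is $N_3 \oplus N_4$, yielding the claimed sequence.

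The main obstacle I anticipate is bookkeeping the orientation and direction of the cut so that the sequence goes the asserted way (i.e. $N_1$ as quotient, $N_2$ as sub, rather than the reverse) uniformly across all the local cases $(*)$, $(**)$ and the two grafting subcases; this is where the sign/arrow-function conventions of Sections~\ref{Construction snake graph of w} and \ref{subsec:string module of snake graph} must be tracked carefully, and where one has to check that the possibly-split cases (when one of $u_i$, $v_i$ is $\emptyset$, so that $N_3$ or $N_4$ degenerates) still fit the same exact sequence with a split summand. Once the combinatorial identification of the strings of $N_3, N_4$ in terms of those of $N_1, N_2$ is in place, invoking \cite{S} (together with \cite{CaSc, CPS} for the snake-graph formulation) closes the argument; the remaining verification is the routine case-by-case comparison already alluded to after Theorem~\ref{thm::CS bijection}.
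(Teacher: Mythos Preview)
Your proposal is correct and follows essentially the same approach as the paper: the paper's entire proof is the single sentence ``Combining the results of the previous sections with the results in \cite{S} (see also \cite{CaSc} and \cite{CPS}), we obtain the following,'' and your plan simply unpacks what that combination means --- translating submodules to matchings via Theorem~\ref{lattice correspondence}, applying the bijection $\Phi$ of Theorem~\ref{thm::CS bijection}, and then invoking the string-module extension results of \cite{S, CaSc, CPS}. Your more detailed bookkeeping of the cut at tile $i$ and the orientation checks goes beyond what the paper records, but is exactly the content one would need to make the cited combination explicit.
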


We note that the extension in Proposition~\ref{prop::extension} might be split and that  there usually are many other extensions of $N_1$ by $N_2$ not captured by $\Phi$.

\end{document}